\newcommand{\mna}[1]{{\mathcal{#1}}}
\newcommand{\mmid}[0]{;\,}		%
\newcommand{\st}[0]{{\ \ \mbox{subject to}\ \ }}
\newcommand{\stl}[0]{{\;\mbox{subject to}\ \ }}
\def\tluste#1{\bm{#1}}
\newcommand{\omace}[1]{\mbox{$\overline{#1}$}}	
\newcommand{\umace}[1]{\mbox{$\underline{#1}$}}  
\newcommand{\imace}[1]{\tluste{#1}} 		
\def\Mid#1{{#1}_c}		
\def\Rad#1{{#1}_\Delta}		
\DeclareMathOperator{\diag}{diag}	
\DeclareMathOperator{\sgn}{sgn}	
\DeclarePairedDelimiter\parentheses{\lparen}{\rparen}	
\DeclarePairedDelimiter\braces{\lbrace}{\rbrace}	
\newcommand{\ovr}[1]{\mbox{$\overline{#1}$}}	
\newcommand{\uvr}[1]{\mbox{$\underline{#1}$}}	
\newcommand{\ivr}[1]{\tluste{#1}} 		
\newcommand{\onum}[1]{\overline{{#1}}} 	
\newcommand{\unum}[1]{\underline{{#1}}} 	
\newcommand{\inum}[1]{\tluste{#1}} 		
\newcommand{\R}[0]{{\mathbb{R}}}
\newcommand{\IR}[0]{{\mathbb{IR}}}	
\newcommand{\seznam}[1]{{\{1, \ldots, {#1}\}}}
\newcommand{\nic}[1]{{}}
\newtheorem{theorem}{Theorem}
\newtheorem{proposition}{Proposition}
\newtheorem{corollary}{Corollary}
\theoremstyle{definition}
\newtheorem{example}{Example}
\begin{document}

\title{Range of optimal values in absolute value linear programming with interval data}

\author{
  Milan Hlad\'{i}k\footnote{
Charles University, Faculty  of  Mathematics  and  Physics,
Department of Applied Mathematics,
Malostransk\'e n\'am.~25, 11800, Prague, Czech Republic,
e-mail: \texttt{hladik@kam.mff.cuni.cz}}
}

\date{\today}
\maketitle

\begin{abstract}
Absolute value linear programming problems is quite a new area of optimization problems, involving linear functions and absolute values in the description of the model. In this paper, we consider interval uncertainty of the input coefficients. Our goal is to determine the best and the worst case optimal values. For the former, we derive an explicit formula, reducing the problem to a certain optimization problem. However, the latter is more complicated, and we propose a lower and upper bound approaches to estimate the value. We also investigate the basis stability, in which situation the best case optimal value is efficiently computable. The worst case optimal value then also admits a simple characterization; however, the computational complexity remains open.
\end{abstract}

\textbf{Keywords:}\textit{ linear programming, interval analysis, absolute value, robustness, NP-hardness, lower and upper bound.}

\bigskip

\section{Introduction}

\paragraph{Absolute value linear programming.}
Absolute value linear programming (AVLP) problems \cite{Man2007} are mathematical programming problems involving linear functions and absolute values. The study of this class of problems was initiated by Mangasarian~\cite{Man2007} and later addressed by several papers \cite{ChenHan2025u,HlaHar2023au,Man2015b,YamFuk2014}. The canonical form of these problems introduced in \cite{HlaHar2023au} has the form of
\begin{align}\label{avlp}
f(A,b,c,D)=\max\ c^Tx \st Ax-D|x|\leq b,
\end{align}
where $c\in\R^n$, $b\in\R^m$ and $A,D\in\R^{m\times n}$. It is assumed that $D$ is nonnegative, since otherwise the negative coefficients can be equivalently reformulated by linear constraints. The presence of the absolute values makes the problem computationally difficult. Indeed, it is NP-hard; for instance, integer linear programs can be reformulated in this form.

In this paper, we study the variations of the optimal value $f(A,b,c,D)$ subject to variations of the input entries in the interval domains. To formulate our problem precisely, we have to introduce some notation, in particular from interval computation.

\paragraph{Notation.}
We use $I_n$ for the identity matrix of size $n\times n$ and $\diag(s)$ for the diagonal matrix with entries given by vector $s\in\R^n$. 
The $i$th canonical unit vector is denoted by~$e_i$ and the all-ones vector of convenient length by $e=(1,\dots,1)^T$. 
Further, $A_B$ denotes the submatrix of $A\in\R^{m\times n}$ determined by the rows of $A$ indexed by~$B$. Similarly, $v_B$ denotes the subvector of $v\in\R^n$ determined by the index set~$B$. 
We use $A_B^T$ and $A^{-T}$ with the meaning $(A_B)^T$ and $(A^{-1})^T$, respectively. 
For a square matrix $A$, the symbol $\rho(A)$ stands for its spectral radius, $\sigma_{\min}(A)$ for its minimal and $\sigma_{\max}(A)$ for its maximal singular value.  
The sign of a real r is defined as $1$ if $r\geq0$ and $-1$ otherwise. 
Inequalities, the sign and absolute value functions are applied entrywise for vectors and matrices. 

\paragraph{Interval computation.}
An interval matrix is defined as the set
\begin{align*}
\imace{A}=[\umace{A},\omace{A}]=\{A\in\R^{m\times n}\mmid \umace{A}\leq\omace{A}\},
\end{align*}
where $ \umace{A},\omace{A}\in\R^{m\times n}$, $\umace{A}\leq\omace{A}$, are given matrices. The interval matrix can equivalently be defined by its midpoint and radius defined, respectively, as
\begin{align*}
\Mid{A}=\frac{1}{2}(\umace{A}+\omace{A}),\quad
\Rad{A}=\frac{1}{2}(\omace{A}-\umace{A}).
\end{align*}
The set of all interval matrices of size ${m\times n}$ is denoted by $\IR^{m\times n}$. 
Interval vectors are regarded as one-column interval matrices. 
Following Rohn~\cite{Roh2006a}, we make use of the following notation: for $\imace{A}\in\IR^{m\times n}$, $r\in[-1,1]^m$ and $s\in[-1,1]^n$, we define
\begin{align*}
A_{r,s} \coloneqq \Mid{A}-\diag(r)\Rad{A}\diag(s) \in\imace{A},
\end{align*}
and for $\ivr{c}\in\IR^n$ and $s\in[-1,1]^n$, we define
\begin{align*}
c_{s} \coloneqq \Mid{c}+\diag(s)\Rad{c} \in\ivr{c}.
\end{align*}
Interval arithmetic is defined in many textbooks; see e.g.\ \cite{HanWal2004,MooKea2009}.
An interval matrix $\imace{A}$ is called regular if every $A\in\imace{A}$ is nonsingular. Testing regularity is a co-NP-hard problem~\cite{PolRoh1993}, but some sufficient conditions exist, such as the condition by Beeck~\cite{Beeck1975}
\begin{align}\label{sufCondRegBeeck}
\rho(|\Mid{A}^{-1}|\Rad{A}) < 1,
\end{align}
or condition by Rex \& Rohn~\cite{RexRoh1998}
\begin{align}\label{sufCondRegRexRohn}
\sigma_{\max}(\Rad{A}) < \sigma_{\min}(\Mid{A}).
\end{align}

\paragraph{Preliminaries.}
AVLP problems can be seen as an extension of two problems, and both of them will be addressed in this paper. First, they extend the class of linear programming (LP) problems
\begin{align}\label{lp}
\max\ c^Tx \st A^*x\leq b.
\end{align}

Second, AVLP problems generalize the problem of solving generalized absolute value equations (GAVE)
\begin{align*}
\tilde{A}x + \tilde{B}|x| = \tilde{b},
\end{align*}
where $\tilde{A},\tilde{B}\in\R^{n\times n}$ and $\tilde{b}\in\R^n$. This problem has been intensively studied in the recent years \cite{KumDee2024,HlaMoo2024au,Mez2020,RohHoo2014,WuShe2021}. In particular, it was shown \cite{WuShe2021} that nonsingularity of $\tilde{A}+\tilde{B}D$ for every $D\in[-I_n,I_n]$ implies that the GAVE has a unique solution for each $\tilde{b}\in\R^n$. This nonsingularity condition is NP-hard to check; however, there are two convenient sufficient conditions
\begin{align*}
\rho(|\tilde{A}^{-1}\tilde{B}|)<1
\end{align*}
and 
\begin{align*}
\sigma_{\max}(\tilde{A}^{-1}\tilde{B})<1.
\end{align*}
Moreover, if any of these conditions succeeds, then the GAVE is solvable in polynomial time; see \cite{ManMey2006,ZamHla2021a}.

\paragraph{Interval AVLP problem.}
Let interval vectors $\ivr{c}\in\IR^n$ and  $\ivr{b}\in\IR^m$ and interval matrices $\imace{A},\imace{D}\in\IR^{m\times n}$ be given. Then the \emph{interval AVLP problem} is introduced as 
\begin{align}\label{intAvlp}
\max\ \ivr{c}^Tx \st \imace{A}x-\imace{D}|x|\leq \ivr{b}.
\end{align}
However, its interpretation is that it is a family of AVLP problems \eqref{avlp} with $A\in\imace{A}$, $b\in\ivr{b}$, $c\in\ivr{c}$ and $D\in\imace{D}$. 

Throughout the paper, we assume that $\umace{D}\geq0$. Basically, this assumption is without loss of generality, but simplifies the overall analysis. The best case optimal value is achieved for $D\coloneqq\omace{D}$ and the worst case optimal value for $D\coloneqq\umace{D}$, so the problem can be reduced to a fixed real matrix~$D$. The absolute values associated with negative entries of $D$ can be linearized by standard techniques, so the difficult cases are the nonnegative entries. Assuming $\umace{D}\geq0$ ensures that both $\umace{D}$ and $\omace{D}$ are nonnegative.

\paragraph{The goal: Range of optimal values.} 
The aim of this paper is to compute the best and the worst case optimal values defined, respectively, as
\begin{align*}
\onum{f} &= \max\ f(A,b,c,D) \st A\in\imace{A},\,b\in\ivr{b},\,c\in\ivr{c},\,D\in\imace{D},\\
\unum{f} &= \min\ f(A,b,c,D) \st A\in\imace{A},\,b\in\ivr{b},\,c\in\ivr{c},\,D\in\imace{D}.
\end{align*}

This problem was thoroughly discussed for interval-valued LP problems \cite{ChinRam2000,GabMur2010,GarHla2019c,GarRad2023,GarRad2024,Hla2009b,Hla2025a,Mra1998,Roh2006b}. Certain extensions are also known general interval nonlinear programming problems \cite{HanWal2004,Hla2011b} or for special classes such as interval convex quadratic programming \cite{Hla2017c,Hla2024a,LiXia2015}, and geometric programming \cite{Hla2024a,Liu2008}. However, for the AVLP problems, it seems that the optimal value range has not been investigated so far.

\section{General results}

\paragraph{Best case optimal value.} 
For the best case optimal value $\onum{f}$ we have a complete characterization by means of a reduction to one real AVLP problem. 

\begin{proposition}\label{propOfAvlp}
We have
\begin{align}
\label{ofAvlp}
\onum{f} 
  &= \max\ \Mid{c}^Tx+\Rad{c}^T|x| \st \Mid{A}x-(\Rad{A}+\omace{D})|x| \leq \ovr{b} \\ 
\label{ofDecomp}
  &= \max_{s\in\{\pm1\}^n}\ \max\ c_s^Tx \st (A_{e,s}-\omace{D}\diag(s))x \leq \ovr{b},\ 
     \diag(s)x \geq 0.
\end{align}
\end{proposition}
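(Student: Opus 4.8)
The plan is to establish the two equalities separately, working from the definition
\[
\onum{f} = \max\ \{ f(A,b,c,D) \mmid A\in\imace{A},\,b\in\ivr{b},\,c\in\ivr{c},\,D\in\imace{D}\}.
\]
First I would argue that in maximizing $f(A,b,c,D)$ over the interval data one may without loss of generality fix $D\coloneqq\omace{D}$ and $b\coloneqq\ovr{b}$: enlarging $D$ (since $\imace{D}\geq0$ and $|x|\geq0$) only relaxes the constraint $Ax-D|x|\leq b$, and enlarging $b$ does the same, so the feasible set only grows, hence the optimal value only grows. This reduces the computation of $\onum{f}$ to maximizing over $A\in\imace{A}$ and $c\in\ivr{c}$ with $D,b$ fixed. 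The remaining task for \eqref{ofAvlp} is to show that for any fixed $x$, the best choice of $A$ and $c$ yields objective $\Mid{c}^Tx+\Rad{c}^T|x|$ and constraint $\Mid{A}x-(\Rad{A}+\omace{D})|x|\leq\ovr{b}$; for this, one chooses $c$ to maximize $c^Tx$ over $\ivr{c}$, which gives $c^Tx=\Mid{c}^Tx+\Rad{c}^T|x|$ by picking $c=c_{\sgn(x)}$, and one chooses $A$ row-by-row to minimize the left-hand side $A_{i,:}x$ over $\imace{A}$, which gives $\min_{A\in\imace{A}} A_{i,:}x=\Mid{A}_{i,:}x-\Rad{A}_{i,:}|x|$. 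Substituting these worst/best selections and noting they can be made independently of each other (the choice of $A$ does not constrain the choice of $c$, and the quantities depend only on $x$) yields the equivalence of the interval problem with the single real AVLP \eqref{ofAvlp}. I would phrase this as: a point $x$ is feasible for \eqref{avlp} for some $A\in\imace{A}$, $D\in\imace{D}$, $b\in\ivr{b}$ iff $\Mid{A}x-(\Rad{A}+\omace{D})|x|\leq\ovr{b}$, and then the objective optimized over $c\in\ivr{c}$ equals $\Mid{c}^Tx+\Rad{c}^T|x|$.

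For the second equality \eqref{ofDecomp}, I would split the feasible region of \eqref{ofAvlp} according to the orthant of $x$: writing $s=\sgn(x)\in\{\pm1\}^n$, on the orthant $\{x : \diag(s)x\geq0\}$ we have $|x|=\diag(s)x$, so the objective $\Mid{c}^Tx+\Rad{c}^T|x|$ becomes $(\Mid{c}+\diag(s)\Rad{c})^Tx=c_s^Tx$, using the definition of $c_s$, and the constraint $\Mid{A}x-(\Rad{A}+\omace{D})|x|\leq\ovr{b}$ becomes $(\Mid{A}-\Rad{A}\diag(s)-\omace{D}\diag(s))x\leq\ovr{b}$, i.e.\ $(A_{e,s}-\omace{D}\diag(s))x\leq\ovr{b}$ by the definition $A_{e,s}=\Mid{A}-\diag(e)\Rad{A}\diag(s)=\Mid{A}-\Rad{A}\diag(s)$. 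Taking the maximum over all $2^n$ orthants recovers the whole feasible set of \eqref{ofAvlp} (every $x$ lies in at least one such orthant, and on the boundary the value is consistent), which gives \eqref{ofDecomp}. The only mild subtlety is the treatment of coordinates with $x_i=0$, where both sign choices are admissible; since $|x_i|=0$ in that case, the contribution to both objective and constraint is the same regardless of the sign picked, so no value is lost or gained, and the maximum over the union of orthants equals the maximum over \eqref{ofAvlp}.

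The main obstacle, such as it is, is the bookkeeping in the first step: one must be careful that the extremal selections of $A$ (minimizing each row $A_{i,:}x$) and of $c$ (maximizing $c^Tx$) can be realized simultaneously by actual members of the interval matrices/vectors, and that they depend only on $x$ and not on each other — this is where the rectangular (coordinatewise independent) structure of interval data is essential, and it is what makes the reduction to a single real AVLP valid rather than merely an inequality. I expect everything else to be a direct substitution using the definitions of $A_{r,s}$, $c_s$, $\Mid{\cdot}$, and $\Rad{\cdot}$ given in the Interval computation paragraph.
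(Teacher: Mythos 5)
Your proposal is correct and follows essentially the same route as the paper: fix $b=\ovr{b}$ and $D=\omace{D}$ by monotonicity, characterize the union over $A\in\imace{A}$ of the feasible sets together with the pointwise maximum of $c^Tx$ over $c\in\ivr{c}$ to obtain \eqref{ofAvlp}, and then decompose by orthants using $|x|=\diag(s)x$ to obtain \eqref{ofDecomp}. The only difference is that you spell out the row-by-row extremal selection $\min_{A\in\imace{A}}A_{i,:}x=\Mid{A}_{i,:}x-\Rad{A}_{i,:}|x|$ and the treatment of boundary points $x_i=0$, which the paper leaves implicit.
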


\begin{proof}
Obviously, the best case realizations satisfy $b=\ovr{b}$ and $D=\omace{D}$. Thus, can write
\begin{align*}
\onum{f} 
 &= \max\ f(A,\ovr{b},c,\omace{D}) \st A\in\imace{A},\,c\in\ivr{c} \\
 &= \max\ \parentheses*{\max_{c\in\ivr{c}}\, c^Tx}
   \st Ax-\omace{D}|x| \leq \ovr{b},\ A\in\imace{A} \\
 &= \max\ \Mid{c}^Tx+\Rad{c}^T|x| \st \Mid{A}x-(\Rad{A}+\omace{D})|x| \leq \ovr{b}.
\end{align*}
Using the orthant-by-orthant decomposition of space $\R^n$, we can write $|x|=\diag(s)x$ for the orthant characterized by $\diag(s)x\geq0$. Therefore, within each orthant, the description of the problem becomes linear as in~\eqref{ofDecomp}.
\end{proof}

Formulation \eqref{ofAvlp} reduces the problem of computing the best case optimal value to one real-valued AVLP problem. In contrast, \eqref{ofDecomp} reduces the problem to $2^n$ LP problems. 
From \eqref{ofAvlp} we see that $\onum{f}$ is always attained as the optimal value of some realization, and the realization has the form of
\begin{align*}
A=A_{e,s},\ \ b=\ovr{b},\ \ c=c_s,\ \ D=\omace{D}
\end{align*}
for some $s\in\{\pm1\}^n$.

From the theory of interval linear programming \cite{ChinRam2000,Hla2025a,Mra1998,Roh1980}, formula \eqref{ofAvlp} can be interpreted as the best case value of a certain interval LP problem.

\begin{corollary}\label{corOnumIlpBest}
The value $\onum{f}$ is the same as the best case optimal value of the interval LP problem
\begin{align}\label{minCorOnumIlpBest}
\max\ \ivr{c}^Tx \st  \imace{A}^* x \leq \ivr{b},
\end{align}
where $\imace{A}^*=\big[\Mid{A}-\Rad{A}-\omace{D},\,\Mid{A}+\Rad{A}+\omace{D}\big]$.
\end{corollary}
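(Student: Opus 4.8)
The plan is to obtain the statement by matching formula \eqref{ofAvlp} against the standard closed-form expression for the best case optimal value of an interval LP problem. Recall from interval linear programming theory \cite{ChinRam2000,Hla2025a,Mra1998,Roh1980} that for the family $\max\ c^Tx \st A^*x\le b$ with $A^*\in\imace{A}^*$, $b\in\ivr{b}$ and $c\in\ivr{c}$, the supremum of the optimal values over all realizations equals
\begin{align*}
\max\ \Mid{c}^Tx+\Rad{c}^T|x| \st \Mid{A}^*x-\Rad{A}^*|x|\le\ovr{b}.
\end{align*}
I would apply this to \eqref{minCorOnumIlpBest}, for which $\Mid{A}^*=\Mid{A}$ and $\Rad{A}^*=\Rad{A}+\omace{D}$; then the right-hand side becomes verbatim the right-hand side of \eqref{ofAvlp}, and by Proposition~\ref{propOfAvlp} the latter equals $\onum{f}$, which finishes the proof.

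Should a self-contained argument be preferred, I would re-derive the interval-LP formula directly. The best case optimal value is the supremum over all realizations $(A^*,b,c)$ of $\max\{c^Tx : A^*x\le b\}$; since the objective data and the constraint data vary independently, one interchanges the suprema and rewrites this as the supremum of $\max_{c\in\ivr{c}} c^Tx$ over $x$ ranging in the union $\bigcup_{A^*\in\imace{A}^*,\ b\in\ivr{b}}\{y : A^*y\le b\}$. Here $\max_{c\in\ivr{c}} c^Tx=\Mid{c}^Tx+\Rad{c}^T|x|$, and the union is obtained by taking $b=\ovr{b}$ and, row by row, the matrix $A^*$ minimizing the left-hand side, i.e.\ using $\min_{A^*}A^*_ix=\Mid{A}^*_ix-\Rad{A}^*_i|x|$; hence the union equals $\{x:\Mid{A}^*x-\Rad{A}^*|x|\le\ovr{b}\}$, which yields the displayed formula.

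I do not expect a genuine obstacle here; the only care needed is with conventions and degenerate cases: one fixes that the best case ranges over feasible realizations only, so that $\onum{f}=-\infty$ exactly when the above feasible region is empty, and one notes that if some realization is unbounded then both quantities are $+\infty$. With these conventions the identification of the two optimal-value formulas is immediate.
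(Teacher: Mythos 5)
Your primary argument is exactly the paper's proof: invoke the known closed-form expression for the best case optimal value of an interval LP, note that $\Mid{A}^*=\Mid{A}$ and $\Rad{A}^*=\Rad{A}+\omace{D}$, and identify the result with \eqref{ofAvlp}. The optional self-contained derivation and the remarks on infeasible/unbounded conventions are correct but not needed; the core matches the paper.
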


\begin{proof}
By \cite{ChinRam2000,Hla2025a}, the best case optimal value of \eqref{minCorOnumIlpBest} can be expressed as the optimal value of the problem
\begin{align*}
 \max\ \Mid{c}^Tx+\Rad{c}^T|x| \st \Mid{A}^*x-\Rad{A}^*|x| \leq \ovr{b}.
\end{align*}
Since $\Mid{A}^*=\Mid{A}$ and $\Rad{A}^*=\Rad{A}+\omace{D}$, the problem takes the form of~\eqref{ofAvlp}.
\end{proof}

\paragraph{Worst case optimal value.} 
Obviously, the worst case realization is attained for $b=\uvr{b}$ and $D=\umace{D}$. Thus, can fix these values. 
A closed-form characterization of $\unum{f}$ is unknown in general; a special case of basis stability is discussed in the next section. That is why we focus on lower and upper bounds. 

\paragraph{Worst case optimal value: lower bound.} 

\begin{proposition}
We have
\begin{align*}
\unum{f} \geq \unum{f}^L,
\end{align*}
where
\begin{align}\label{dfUfL}
\unum{f}^L = \max\ \Mid{c}^Tx-\Rad{c}^T|x| \st \Mid{A}x+(\Rad{A}-\umace{D})|x| \leq \uvr{b}.
\end{align}
\end{proposition}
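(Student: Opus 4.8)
I would show that $\unum{f}^L$ is the optimal value of one particular realization $f(A,b,c,D)$ with $A\in\imace{A}$, $b\in\ivr{b}$, $c\in\ivr{c}$, $D\in\imace{D}$; since $\unum{f}$ is the minimum of $f$ over all such realizations, this immediately yields $\unum{f}\le\unum{f}^L$... wait, I need the reverse inequality $\unum{f}\ge\unum{f}^L$, so instead I want to argue that $\unum{f}^L$ is a \emph{lower bound} for every realization. The cleanest route is the orthant decomposition, mirroring the proof of Proposition~\ref{propOfAvlp}.

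\textbf{Proof.}
Fix $b=\uvr{b}$ and $D=\umace{D}$, which are the worst-case realizations for these two data. It remains to minimize $f(A,\uvr{b},c,\umace{D})$ over $A\in\imace{A}$ and $c\in\ivr{c}$. Using the orthant-by-orthant decomposition of $\R^n$, write $|x|=\diag(s)x$ on the orthant $\diag(s)x\ge0$ for $s\in\{\pm1\}^n$. On this orthant, the feasible set of a realization $(A,\uvr{b},c,\umace{D})$ is $\{x:\ (A-\umace{D}\diag(s))x\le\uvr{b},\ \diag(s)x\ge0\}$ and the objective is $c^Tx$. Enlarging the feasible set and decreasing the objective can only decrease the optimal value, so for every $A\in\imace{A}$, $c\in\ivr{c}$ and every orthant $s$ we have
\begin{align*}
\max\ c^Tx \st (A-\umace{D}\diag(s))x\le\uvr{b},\ \diag(s)x\ge0
 \ \ge\ \max\ \unum{c}_s^Tx \st A_{e,-s}'x\le\uvr{b},\ \diag(s)x\ge0,
\end{align*}
where $\unum{c}_s$ and the matrix on the right are chosen so that the constraint region is as large as possible and the objective as small as possible over all admissible $(A,c)$; concretely, on $\diag(s)x\ge0$ one checks $c^Tx\ge \Mid{c}^Tx-\Rad{c}^T\diag(s)x=\Mid{c}^Tx-\Rad{c}^T|x|$, and similarly the tightest common relaxation of the constraints over $A\in\imace{A}$ is $\Mid{A}x+(\Rad{A}-\umace{D})|x|\le\uvr{b}$ on that orthant. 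Taking the maximum over the $2^n$ orthants reconstructs exactly the AVLP problem \eqref{dfUfL}, so its optimal value $\unum{f}^L$ is $\le f(A,\uvr{b},c,\umace{D})$ for every $A\in\imace{A}$, $c\in\ivr{c}$, hence $\le\unum{f}$.

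\textbf{What is delicate.} The one point that needs care is the direction of the relaxation: unlike the best-case situation, here we want an under-estimator that is valid \emph{simultaneously} for all realizations, so on each orthant we replace the true feasible polyhedron by a fixed \emph{superset} and the true objective by a fixed \emph{minorant}. The orthant-wise bound $c^Tx\ge\Mid{c}^Tx-\Rad{c}^T|x|$ and the inclusion $\{x:\Mid{A}x-\diag(s)\Rad{A}\diag(s)x-\umace{D}\diag(s)x\le\uvr{b}\}\subseteq\{x:\Mid{A}x+(\Rad{A}-\umace{D})|x|\le\uvr{b}\}$ on $\diag(s)x\ge0$ are the facts to verify; they hold because $\Rad{A}\ge0$ forces $-\diag(s)\Rad{A}\diag(s)x\le\Rad{A}|x|$ entrywise on the orthant. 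After that, recombining the orthants into the single nonconvex AVLP \eqref{dfUfL} is the same bookkeeping step as in Proposition~\ref{propOfAvlp}. I do not anticipate any genuine obstacle; the main risk is merely sign management in $\diag(s)$ when passing between the linear-per-orthant form and the absolute-value form.
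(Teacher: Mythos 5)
Your two entrywise inequalities are the right ingredients, but the logical frame you wrap around them points the wrong way, and read literally the argument does not close. For a maximization problem, enlarging the feasible set can only \emph{increase} the optimal value, so \quo{enlarging the feasible set and decreasing the objective can only decrease the optimal value} is not a valid principle, and \quo{replace the true feasible polyhedron by a fixed superset} is the wrong prescription: maximizing a minorant of the objective over a superset of the true feasible region gives a bound in neither direction. Your displayed inclusion places the sets in the wrong order --- it asserts that a realization's feasible set is contained in that of \eqref{dfUfL}, which is false in general (already for $m=n=1$, $s=1$, $\Mid{A}=\umace{D}=0$, $\Rad{A}=1$, $\uvr{b}=1$, the realization's set on the orthant $x\ge0$ is $[0,\infty)$ while the set of \eqref{dfUfL} is $[0,1]$). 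What the proof needs --- and what your own verified inequality $-\diag(s)\Rad{A}\diag(s)x\le\Rad{A}|x|$ actually delivers --- is the reverse containment: the feasible set of \eqref{dfUfL} is contained in the feasible set of \emph{every} realization, since
\begin{align*}
Ax-\umace{D}|x| \;\le\; \Mid{A}x+\Rad{A}|x|-\umace{D}|x| \;=\; \Mid{A}x+(\Rad{A}-\umace{D})|x|
\qquad\text{for all } A\in\imace{A},\ x\in\R^n.
\end{align*}
Then any $x$ feasible for \eqref{dfUfL} is feasible for the realization $(A,\uvr{b},c,\umace{D})$ and satisfies $c^Tx\ge\Mid{c}^Tx-\Rad{c}^T|x|$, whence $f(A,\uvr{b},c,\umace{D})\ge\unum{f}^L$ for every $A\in\imace{A}$, $c\in\ivr{c}$, and $\unum{f}\ge\unum{f}^L$ follows. (So \eqref{dfUfL} is a \emph{restriction} to points feasible for all realizations --- an intersection-type set --- not a \quo{common relaxation}.)

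Once the direction is fixed, your argument collapses to the paper's proof, and the orthant decomposition is an unnecessary detour: both inequalities $c^Tx\ge\Mid{c}^Tx-\Rad{c}^T|x|$ and $Ax-\umace{D}|x|\le\Mid{A}x+(\Rad{A}-\umace{D})|x|$ hold globally on $\R^n$, so there is nothing to split into orthants and recombine. The decomposition is needed in Proposition~\ref{propOfAvlp} only to obtain the LP reformulation \eqref{ofDecomp}, not to establish a bound of this kind.
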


\begin{proof}
For every $x\in\R^n$, $A\in\imace{A}$ and $c\in\ivr{c}$ we have
$$c^Tx \geq \Mid{c}^Tx-\Rad{c}^T|x|$$
and also
\begin{align*}
 \Mid{A}x+(\Rad{A}-\umace{D})|x| \geq  Ax-\umace{D}|x|.
\end{align*}
Therefore
$$
f(A,\uvr{b},c,\umace{D}) \geq \unum{f}^L
$$
for every $A\in\imace{A}$ and $c\in\ivr{c}$, from which $\unum{f} \geq \unum{f}^L$ follows.
\end{proof}

As Proposition~\ref{propFLtight} below shows, under certain assumptions the lower bound $\unum{f}^L$ is tight and we have $\unum{f}=\unum{f}^L$. However, Example~\ref{exUfUflDiff} later on shows that $\unum{f} > \unum{f}^L$ may also happen.

\begin{proposition}\label{propFLtight}
Let $s^*$ be the sign of the optimal solution to \eqref{dfUfL}. If the optimal solution to
\begin{align}\label{maxPropFuLuEq}
f(A_{e,-s^*},c_{-s^*}) \coloneqq \max\ c^T_{-s^*}x \st A_{e,-s^*}x-\umace{D}|x| \leq \uvr{b}
\end{align}
has the same sign $s^*$, then  $\unum{f}=\unum{f}^L$.
\end{proposition}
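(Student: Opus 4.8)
The plan is to show that under the stated hypothesis the lower bound $\unum{f}^L$ is actually attained by a genuine realization of the interval AVLP problem, which forces $\unum{f}\le\unum{f}^L$; combined with the already-proved reverse inequality this gives equality. The first step is to recall where the inequality $\unum{f}\ge\unum{f}^L$ came from: in the preceding proposition it was derived from the two pointwise bounds $c^Tx\ge\Mid{c}^Tx-\Rad{c}^T|x|$ and $\Mid{A}x+(\Rad{A}-\umace{D})|x|\ge Ax-\umace{D}|x|$. The key observation is that for a fixed sign vector $s$ and any $x$ with $\diag(s)x\ge0$, both of these become equalities precisely when $c=c_{-s}$ and $A=A_{e,-s}$; indeed $c_{-s}^Tx=\Mid{c}^Tx-\diag(s)\Rad{c}^Tx=\Mid{c}^Tx-\Rad{c}^T|x|$ since $|x|=\diag(s)x$, and similarly $A_{e,-s}x=\Mid{A}x+\diag(s)\Rad{A}x$, so $A_{e,-s}x-\umace{D}|x|=\Mid{A}x+(\Rad{A}-\umace{D})\diag(s)x=\Mid{A}x+(\Rad{A}-\umace{D})|x|$.

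Next I would use this to compare the three quantities $\unum{f}^L$, $f(A_{e,-s^*},c_{-s^*})$ and $\unum{f}$. Let $\hat x$ be the optimal solution to \eqref{dfUfL}, with $\sgn(\hat x)=s^*$. Because $\diag(s^*)\hat x\ge0$, the identities above show that $\hat x$ is feasible for \eqref{maxPropFuLuEq} (its constraint value at $\hat x$ equals the constraint value of \eqref{dfUfL} at $\hat x$, which is $\le\uvr{b}$) and that its objective value there, $c_{-s^*}^T\hat x$, equals $\Mid{c}^T\hat x-\Rad{c}^T|\hat x|=\unum{f}^L$. Hence $f(A_{e,-s^*},c_{-s^*})\ge\unum{f}^L$. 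Conversely, let $x^*$ be the optimal solution to \eqref{maxPropFuLuEq}; by hypothesis $\sgn(x^*)=s^*$, so again $|x^*|=\diag(s^*)x^*$ and the same identities give that $x^*$ is feasible for \eqref{dfUfL} with the same objective value, whence $\unum{f}^L\ge f(A_{e,-s^*},c_{-s^*})$. Therefore $\unum{f}^L=f(A_{e,-s^*},c_{-s^*})$.

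Finally, $f(A_{e,-s^*},c_{-s^*})$ is by definition the optimal value of the real AVLP problem with data $A=A_{e,-s^*}\in\imace{A}$, $b=\uvr{b}\in\ivr{b}$, $c=c_{-s^*}\in\ivr{c}$ and $D=\umace{D}\in\imace{D}$, so it is one of the values $f(A,b,c,D)$ over which $\unum{f}$ takes the minimum; consequently $\unum{f}\le f(A_{e,-s^*},c_{-s^*})=\unum{f}^L$. Together with $\unum{f}\ge\unum{f}^L$ this yields $\unum{f}=\unum{f}^L$.

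The only delicate point is the claim that $\hat x$ and $x^*$ are actually \emph{optimal} for the respective other problems rather than merely feasible with matching objective value; this is handled by the two-sided comparison above, which pins down $\unum{f}^L=f(A_{e,-s^*},c_{-s^*})$ exactly, and it is here that the sign hypothesis on the solution of \eqref{maxPropFuLuEq} is essential — without it $x^*$ need not be feasible for \eqref{dfUfL} and only the one-sided inequality survives. A minor technical caveat, worth a sentence in the proof, is the treatment of components $\hat x_i=0$: there $|\hat x_i|=0$ regardless of the sign chosen, so the identities are unaffected and the argument goes through for any consistent choice of $s^*$.
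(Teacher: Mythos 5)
Your proposal is correct and follows essentially the same route as the paper: both arguments rest on the chain $\unum{f}^L\le\unum{f}\le f(A_{e,-s^*},c_{-s^*})$ together with the observation that on the orthant $\diag(s^*)x\ge0$ the problem \eqref{dfUfL} and the realization \eqref{maxPropFuLuEq} have identical objectives and constraints, the two sign hypotheses guaranteeing that both optima live in that orthant. Your explicit two-sided exchange of feasible points is just a slightly more detailed rendering of the paper's chain of equalities restricted to the orthant $s^*$.
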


\begin{proof}
We have
$$
\unum{f}^L \leq \unum{f} \leq f(A_{e,-s^*},c_{-s^*}) 
$$
since \eqref{maxPropFuLuEq} is a particular realization of the interval AVLP problem \eqref{intAvlp}. Thus, it is sufficient to show that 
$
\unum{f}^L = f(A_{e,-s^*},c_{-s^*}).
$ 
Indeed, this is true since
\begin{align*}
f(A_{e,-s^*},c_{-s^*})
 &= \max\ c^T_{-s^*}x \st A_{e,-s^*}x-\umace{D}|x| \leq \uvr{b},\ \diag(s^*)x\geq0 \\
 &= \max\ \Mid{c}^Tx-\Rad{c}^T|x| \\
 &\ \ \st  
   (\Mid{A}+\Rad{A}\diag(s^*))x-\umace{D})|x| \leq \uvr{b},\ \diag(s^*)x\geq0 \\
 &= \unum{f}^L.
\qedhere
\end{align*}
\end{proof}

\paragraph{Worst case optimal value: upper bound.} 

For the upper bound approach, we present an iterative improving method. For any $A\in\imace{A}$ and $c\in\ivr{c}$ the corresponding real-valued AVLP problem provides an upper bound, i.e., $f(A,\uvr{b},c,\umace{D})\geq\unum{f}$. So the idea of the iterative method is to seek for promising values from $\imace{A}$ and~$\ivr{c}$. 

We begin with the midpoint values $A\coloneqq\Mid{A}$ and $c\coloneqq\Mid{c}$. The corresponding optimal value $f(A,\uvr{b},c,\umace{D})$ provides an initial upper bound on~$\unum{f}$:
$$
\unum{f}^U \coloneqq f(A,\uvr{b},c,\umace{D}).
$$
Let $s$ be the sign of the computed optimal solution. In the orthant determined by $s$, the most promising choice is $A\coloneqq A_{e,-s}$ and $c\coloneqq c_{-s}$. For these values, we re-calculate the AVLP problem and improve the upper bound
$$
\unum{f}^U \coloneqq \min\braces*{\unum{f}^U,\,f(A,\uvr{b},c,\umace{D})}.
$$ 
We iterate this process until the upper bound $\unum{f}^U$ is not improved.

\paragraph{Worst case optimal value: examples.} 

Now, we present a few of examples pointing out specific properties of the best or worst case optimal values. The first example illustrates, among others, that there can be strict inequality $\unum{f}<\unum{f}^L$. 

\begin{example}\label{exUfUflDiff}
Consider the interval AVLP problem
\begin{align*}
\max\ x_2 \st -3\leq x_1\leq 3,\ x_2-|x_1|\leq 0,\ \inum{a} x_1+x_2\leq 3,
\end{align*}
where $\inum{a}=[-1,1]$. Figure~\ref{figExUfUflDiff} depicts the feasible set; the dashed lines correspond to particular instances of the inequality $\inum{a} x_1+x_2\le 3$ for $a\in\{-1,0,0.5,1\}$.
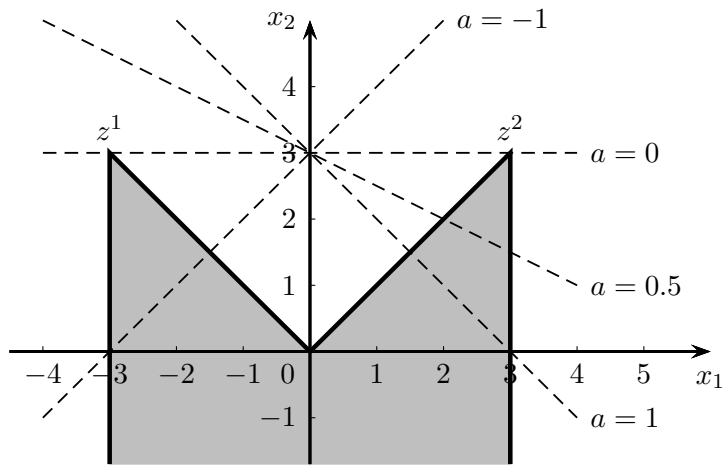
\begin{figure}[t]
\centering
\psset{unit=5.3ex,arrowscale=1.5}
\begin{pspicture}(-5.2,-2.2)(6.65,5.5)
\newgray{mygray}{0.9}
\pspolygon[fillstyle=solid,fillcolor=lightgray,linecolor=lightgray,linewidth=0pt](-3,-1.7)(-3,3)(0,0)(3,3)(3,-1.7)(-3,-1.7)
\psaxes[ticksize=2pt,labels=all,ticks=all,showorigin=false,linewidth=1pt]{->}(0,0)(-4.5,-1.7)(6,5)
\psline[linewidth=1.7pt](-3,-1.7)(-3,3)(0,0)(3,3)(3,-1.7)
\uput[-135](-0.08,-0.06){$0$}
\uput[-90](6,-0.1){$x_1$}
\uput[-180](0,5.){$x_2$}
\uput[90](3,3){$z^2$}
\uput[90](-3,3){$z^1$}
\psline[linestyle=dashed, linewidth=0.7pt](-4,3)(4,3)
\uput[0](4,3){$a=0$}
\psline[linestyle=dashed, linewidth=0.7pt](-4,5)(4,1)
\uput[0](4,1){$a=0.5$}
\psline[linestyle=dashed, linewidth=0.7pt](-2,5)(4,-1)
\uput[0](4,-1){$a=1$}
\psline[linestyle=dashed, linewidth=0.7pt](-4,-1)(2,5)
\uput[0](2,5){$a=-1$}
\end{pspicture}
\caption{(Example~\ref{exUfUflDiff}) Illustration of the feasible set with different choices of $a\in\inum{a}$\label{figExUfUflDiff}}
\end{figure}

For $a\in[0,1]$, the optimal solution is $z^1=(-3,3)^T$ and the optimal value is~$3$. Similarly, for $a\in[-1,0]$, the optimal solution is $z^2=(3,3)^T$ and the optimal value is~$3$. Therefore, the optimal value $f(a)=3$ is constant on $a\in\inum{a}$ and we simply have $\unum{f}=3$.

However, the problem \eqref{dfUfL} reads as
\begin{align*}
\unum{f}^L = 
\max\ x_2 \st -3\leq x_1\leq 3,\ x_2-|x_1|\leq 0,\ x_2+|x_1|\le 3.
\end{align*}
It has two optimal solutions $(1.5,1.5)^T$ and $(-1.5,1.5)^T$, and the optimal value is $\unum{f}^L = 1.5$.

The difference between $\unum{f}$ and $\unum{f}^L$ can be even more enormous. Further, consider a variant of the example by including the inequality $x_2\geq 3$ into the constraints. Now, the optimal value $f(a)=3$ is still constant on $a\in\inum{a}$ and $\unum{f}=3$, while the problem \eqref{dfUfL} is infeasible, yielding $\unum{f}^L = -\infty$.

The upper bound iterative process works as follows. We choose $a\coloneqq\Mid{a}=0$ and compute the optimal value $f(a)=3$; the corresponding optimal solutions are both $z^1$ and $z^2$. We already achieved the worst case optimal value, so no further iterations improve it and we have tight bound $\unum{f}^U=\unum{f}$.
\end{example}

In contrast to the best case optimal value, the worst case optimal value $\unum{f}$ need not be achieved for some particular realization of the interval AVLP problem even when $\unum{f}$ is a finite value. We demonstrate it by the following example. Notice that in interval linear programming this is not the case; the  worst case optimal value is always attained for a particular realization, and under certain conditions the realization uses the endpoints of the interval coefficients \cite{Mra1998,Roh2006b}.

\begin{example}\label{exUfFinAttain}
Consider the interval AVLP problem
\begin{align*}
\max\ x_2 \st 
 &-1\leq x_1\leq 1,\\ 
 &-|x_1|\leq -1,\\ 
 &0\leq x_1+x_2\leq 1,\\ 
 &x_2\leq 1,\\ 
 &\inum{a} x_1+x_2\leq 0,
\end{align*}
where $\inum{a}=[0,1]$. The feasible set without the last constraint consists of a point $z^1=(-1,1)^T$ and the line segment joining points $(1,0)^T$ and $(1,-1)^T$. Figure~\ref{figExUfFinAttain} depicts the feasible set, where the dashed lines correspond to particular instances of the inequality $\inum{a} x_1+x_2\le 0$ for $a\in\{0.2,0.5,1\}$.
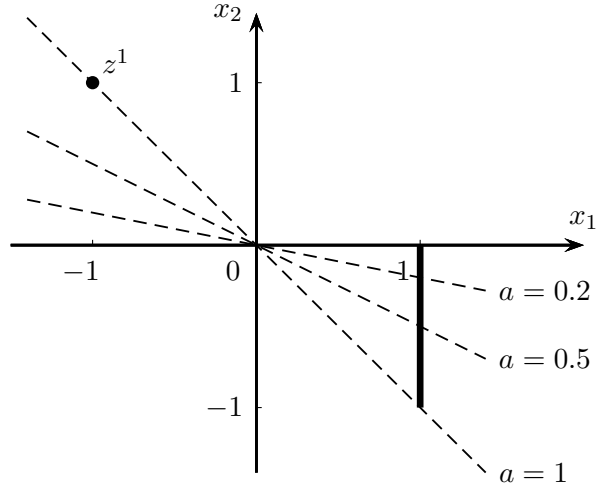
\begin{figure}[t]
\centering
\psset{unit=13ex,arrowscale=1.5}
\begin{pspicture}(-1.8,-1.6)(2.3,1.6)
\newgray{mygray}{0.9}
\psaxes[ticksize=2pt,labels=y,ticks=all,showorigin=false,linewidth=1pt]{->}(0,0)(-1.5,-1.4)(2.0,1.43)
\qdisk(-1,1){2.5pt}
\psline[linewidth=2.5pt](1,0)(1,-1)
\uput[-135](-0.04,-0.04){$0$}
\uput[-135](-0.9,-0.04){$-1$}
\uput[-135](1.0,-0.04){$1$}
\uput[90](2.0,0.02){$x_1$}
\uput[-180](0,1.43){$x_2$}
\uput[45](-1,1){$z^1$}
\psline[linestyle=dashed, linewidth=0.7pt](-1.4,1.4)(1.4,-1.4)
\uput[0](1.4,-1.4){$a=1$}
\psline[linestyle=dashed, linewidth=0.7pt](-1.4,0.7)(1.4,-0.7)
\uput[0](1.4,-0.7){$a=0.5$}
\psline[linestyle=dashed, linewidth=0.7pt](-1.4,0.28)(1.4,-0.28)
\uput[0](1.4,-0.28){$a=0.2$}
\end{pspicture}
\caption{(Example~\ref{exUfFinAttain}) Illustration of the feasible set with different choices of $a\in\inum{a}$\label{figExUfFinAttain}}
\end{figure}

For $a\in[0,1)$, the optimal solution is $(1,-a)^T$ and the optimal value is~$-a$. For $a=1$, the optimal solution is $z^1$ and the optimal value is~$1$. This means that the worst case optimal value is $\unum{f}=-1$, but it is not attained for a particular realization of $a\in\inum{a}$, as it is the limit value of $f(a)$ when $a\to 1$.

To compute the lower bound $\unum{f}^L$, we basically replace the last inequality $\inum{a} x_1+x_2\leq 0$ by $0.5x_1+0.5|x_1|+x_2\leq 0$. The optimal value  of the problem is $\unum{f}^L=-1$, so the lower bound is tight.

How works the upper bound iterative process for this example? We select $a\coloneqq\Mid{a}=0.5$ and compute the optimal value $f(a)=-0.5$. The optimal solution is $(1,-0.5)^T$, so its sign vector $s=(1,-1)^T$ causes the update $a\coloneqq1$. For this selection, the optimal value increases. Thus, the iterations are terminated and we obtain the upper bound $\unum{f}^U=-0.5>-1=\unum{f}$.
\end{example}

Further, we illustrate by an example that $\unum{f}=-\infty$ does no necessarily imply that there is an infeasible realization (the converse implication is always true). This is again in contrast to interval linear programming, where $\unum{f}=-\infty$ if and only if there exists an infeasible realization \cite{Hla2013b,Hla2025a}. 

\begin{example}\label{exUfInfInfeas}
Consider the interval AVLP problem
\begin{align*}
\max\ x_2 \st 
 &-1\leq x_1\leq 1,\\ 
 &-|x_1|\leq -1,\\ 
 &3x_1+2x_2\leq -1,\\ 
 &-|x_2|\leq -1,\\ 
 &x_1+\inum{a} x_2\leq -1,
\end{align*}
where $\inum{a}=[0,1]$. Wihout the last (interval-valued) constraint, the feasible set consists of a point $z^1=(-1,1)^T$ and two rays emerging from points and $(-1,-1)^T$ and $(1,-2)^T$ in the direction of $(0,-1)^T$. Figure~\ref{figExUfInfInfeas} depicts the feasible set, where the dashed lines correspond to particular instances of the inequality $x_1+\inum{a} x_2\le -1$ for $a\in\{0,0.5,1\}$.
\begin{figure}[t]
\centering
\psset{xunit=9.5ex,yunit=7ex,arrowscale=1.5}
\begin{pspicture}(-2.0,-5.0)(2.9,2.2)
\newgray{mygray}{0.9}
\psaxes[ticksize=2pt,labels=all,ticks=all,showorigin=false,linewidth=1pt]{->}(0,0)(-1.9,-4.8)(2.5,1.6)
\qdisk(-1,1){2.5pt}
\psline[linewidth=2.5pt](1,-2)(1,-4.8)
\psline[linewidth=2.5pt](-1,-1)(-1,-4.8)
\uput[-135](-0.04,-0.04){$0$}
\uput[90](2.5,0.02){$x_1$}
\uput[-180](0,1.6){$x_2$}
\uput[45](-1,1){$z^1$}
\psline[linestyle=dashed, linewidth=0.7pt](-1.8,0.8)(1.6,-2.6)
\uput[180](-1.8,0.8){$a=1$}
\psline[linestyle=dashed, linewidth=0.7pt](-1,1.6)(-1,-4.8)
\uput[110](-1,1.6){$a=0$}
\psline[linestyle=dashed, linewidth=0.7pt](-1.7,1.4)(1.4,-4.8)
\uput[145](-1.7,1.4){$a=0.5$}
\end{pspicture}
\caption{(Example~\ref{exUfInfInfeas}) Illustration of the feasible set with different choices of $a\in\inum{a}$\label{figExUfInfInfeas}}
\end{figure}
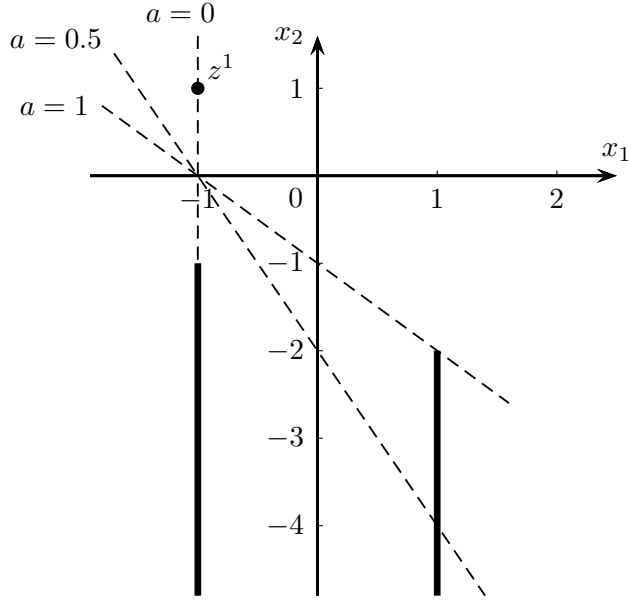

For $a=0$, the optimal solution is $z^1$ and the optimal value is~$1$, while for $a\in(0,1]$, the optimal solution is $(1,-2/a)^T$ and the optimal value is $-2/a$. Thus, $\unum{f}=-\infty$, but this value is achieved as a limit and not attained as an optimal value of some realization.

In this example, the lower bound is again tight, $\unum{f}^L=-\infty$. In contrast, the upper bound is poor now. The initial value $a\coloneqq\Mid{a}=0.5$ yields $f(a)=-4$. The next iteration selects $a\coloneqq 0$, producing $f(a)=1$. Thus, the upper bounds is $\unum{f}^U=-4$.
\end{example}

\section{Basis stability}

\paragraph{Basis in LP.} 
For an LP problem \eqref{lp}, a basis is an index set $B$ such that $A_B$ is nonsingular. Basis $B$ is optimal if the following two conditions hold
\begin{align}
\label{lpBasis1}
A_NA_B^{-1}b_B&\leq b_N,\\
\label{lpBasis2}
A_B^{-T}c&\geq0,
\end{align}
where $N\coloneqq \seznam{n}\setminus B$ is the set of nonbasic indices. The optimal solution corresponding to basis $B$ is $x=A_B^{-1}b_B$. 
Further, basis $B$ is nondegenerate if inequality \eqref{lpBasis2} holds strictly.

\paragraph{Basis stability in interval LP.} 
In interval linear programming, many problems becomes tractable if there is a basis that is optimal for every realization of interval coefficients. This is also the case here to some extent. 
Let $B$ be a basis of the LP problem \eqref{lp}. Define $\imace{A}^*$ as in Corollary~\ref{corOnumIlpBest},
$$
\imace{A}^* \coloneqq \big[\Mid{A}-\Rad{A}-\omace{D},\,\Mid{A}+\Rad{A}+\omace{D}\big],
$$
and, consider the corresponding interval LP problem
\begin{align}\label{ilp}
\max\ \ivr{c}^Tx \st  \imace{A}^* x \leq \ivr{b}.
\end{align}
This interval LP problem can be considered as a relaxation of our interval AVLP problem~\eqref{intAvlp}. Indeed, in Corollary~\ref{corOnumIlpBest}, we showed that both share the same best case optimal value. Moreover, they have also the same united solution set. It is the set of solutions that are feasible for at least one realization of interval values, and for both interval systems $\imace{A}^* x \leq \ivr{b}$ and $\imace{A}x-\imace{D}|x|\leq \ivr{b}$ they are characterized by the real-valued system (see the proof of Proposition~\ref{propOfAvlp})
$$
\Mid{A}x-(\Rad{A}+\omace{D})|x| \leq \ovr{b}.
$$

The interval LP problem \eqref{ilp} is called (nondegenerate) $B$-stable \cite{Bee1978,Hla2014a,Jan1988,Kon2001,Kra1975} if $B$ is a (nondegenerate) optimal basis to every realization, that is, to \eqref{lp} for each $A^*\in\imace{A}^*$.

\paragraph{Basis stability in interval AVLP.} 

We say that the interval AVLP problem \eqref{intAvlp} is \emph{$B$-stable} if the corresponding interval LP problem \eqref{ilp} is $B$-stable.

The best case optimal value then can be computed by solving only one LP problem~\eqref{maxPropBstabOf}, whereas the worst case optimal value analysis is rather more complicated.

\begin{proposition}
Under $B$-stability, we have
\begin{align}\label{maxPropBstabOf}
\onum{f} = \max\ \ovr{b}_B^Ty \st  
 (\Mid{A}-\Rad{A}-\omace{D})_B^T y \leq \ovr{c},\ 
 (\Mid{A}+\Rad{A}+\omace{D})_B^T y \geq \uvr{c},\ y\geq0.
\end{align}
\end{proposition}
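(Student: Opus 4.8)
The plan is to exploit the fact that, under $B$-stability, the basis $B$ is optimal for every realization $A^*\in\imace{A}^*$ of the relaxed interval LP problem \eqref{ilp}, and that by Corollary~\ref{corOnumIlpBest} the best case optimal value $\onum{f}$ of the interval AVLP problem equals the best case optimal value of \eqref{ilp}. So it suffices to compute the best case optimal value of the interval LP problem \eqref{ilp} under $B$-stability, and then simplify. First I would recall that when $B$ is optimal for a realization $(A^*,b,c)$, the optimal value is $c_B^T A^{*-1}_B b_B$; but it is cleaner to pass to the LP dual restricted to the basic rows. Indeed, by LP duality (and nondegeneracy is not even needed for the value), the optimal value equals $\max\{b_B^T y : A^{*T}_B y = c,\ y\ge 0\}$ — here $y$ plays the role of the dual variables associated with the rows in $B$, and since $B$ is an optimal basis for this realization, this dual problem is feasible and its value coincides with the primal optimum.

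Next I would take the supremum over all realizations. The coefficient matrix $A^*$ ranges over $\imace{A}^* = [\,\Mid{A}-\Rad{A}-\omace{D},\ \Mid{A}+\Rad{A}+\omace{D}\,]$, the right-hand side $b$ over $\ivr{b}$, and $c$ over $\ivr{c}$. Writing the best case as a single joint maximization over $x$ (the dual variable, here called $y$), over the row-submatrix $A^*_B\in\imace{A}^*_B$, over $b_B\in\ivr{b}_B$, and over $c\in\ivr{c}$, the objective $b_B^T y$ is maximized by taking $b_B=\ovr{b}_B$ (this is legitimate because $B$-stability guarantees the dual feasible region stays nonempty for every choice of the interval data, so the maximization decouples). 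The equality constraint $A^{*T}_B y = c$ with $A^{*T}_B$ and $c$ both free to vary over their intervals can then be relaxed: as $A^{*T}_B y$ ranges over the interval vector obtained by interval evaluation and $c$ ranges over $\ivr{c}$, the existence of a matching pair is equivalent to the pair of inequalities $(\Mid{A}-\Rad{A}-\omace{D})_B^T y \le \ovr{c}$ and $(\Mid{A}+\Rad{A}+\omace{D})_B^T y \ge \uvr{c}$ together with $y\ge0$. This is exactly the standard Oettli–Prager-type argument: for $y\ge0$, the set $\{A^{*T}_B y : A^*\in\imace{A}^*\}$ is the box $[(\Mid{A}-\Rad{A}-\omace{D})_B^T y,\ (\Mid{A}+\Rad{A}+\omace{D})_B^T y]$, and it intersects $\ivr{c}$ iff those two inequalities hold. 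Putting these together yields precisely \eqref{maxPropBstabOf}.

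The main obstacle I expect is the bookkeeping around \emph{why} the maximization over the interval data decouples cleanly — in particular, justifying that one may independently set $b_B=\ovr{b}_B$ while simultaneously choosing the worst $A^*_B$ and $c$ for the constraint, without the feasible region of the dual collapsing. This is where $B$-stability does the real work: it guarantees that $B$ is an optimal basis (hence the dual restricted to $B$ is feasible and strong duality holds at value $b_B^T y$) for \emph{every} realization, so for each fixed admissible $(A^*_B,c)$ the dual feasible set $\{y\ge0 : A^{*T}_B y = c\}$ is nonempty, and increasing $b_B$ to $\ovr{b}_B$ only increases the objective. One should also note that the sign condition $y\ge 0$ must be attached explicitly in the reformulated problem, since it is this nonnegativity that makes the Oettli–Prager box description of $\{A^{*T}_B y\}$ valid. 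A minor secondary point is to confirm that the constraint matrix in \eqref{ilp} is $\imace{A}^*$ with the correct radius $\Rad{A}+\omace{D}$, which is already established in Corollary~\ref{corOnumIlpBest}, so no new verification is needed there.
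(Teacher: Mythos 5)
Your proposal is correct and follows essentially the same route as the paper: reduce to the interval LP \eqref{ilp} via Corollary~\ref{corOnumIlpBest}, pass to its dual, and use $B$-stability to express the best case value over the basic rows. The only difference is that where the paper cites Beeck's formula for the worst case optimal value of the dual interval LP, you rederive that formula directly from the restricted dual $\max\{b_B^Ty: A^{*T}_By=c,\ y\ge0\}$ together with the Oettli--Prager description of $\{A^{*T}_By : A^*\in\imace{A}^*\}$ for $y\ge0$ --- a sound unpacking of the same argument.
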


\begin{proof}
Recall from Corollary~\ref{corOnumIlpBest} that $\onum{f}$ is the same as the best case optimal value of the interval LP problem~\eqref{ilp}. The dual problem to \eqref{ilp} reads as
\begin{align*}
\min\ \ivr{b}^Ty \st  (\imace{A}^*)^T y =\ivr{c},\ y\geq0.
\end{align*}
Since it is also basis stable, $\onum{f}$ is its worst case optimal value. By Beeck~\cite{Bee1978}, we obtain
\begin{align*}
\onum{f} = \max\ \ovr{b}_B^Ty \st  
 (\umace{A}^*)_B^T y \leq \ovr{c},\ 
 (\omace{A}^*)_B^T y \geq \uvr{c},\ y\geq0,
\end{align*}
which is equivalent to~\eqref{maxPropBstabOf}.
\end{proof}

Regarding the worst case optimal value~$\unum{f}$, we first derive a characterization of it and its lower bound $\unum{f}^L$. Then, we show that both values are equal and can be expressed by means of solving one absolute value equation system.

\begin{proposition}\label{propBstabUf}
Under $B$-stability, we have
\begin{align}\label{minPfPropBstabUf}
\unum{f} =  
 \min\ \Mid{c}^Tx-\Rad{c}^T|x| \st (\Mid{A})_Bx+(\Rad{A}-\umace{D})_B|x| \geq \uvr{b}_B.
\end{align}
\end{proposition}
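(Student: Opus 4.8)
The plan is to read off $\unum{f}$ from the basic-vertex structure that $B$-stability forces on every realization, and then to recognize \eqref{minPfPropBstabUf} as the minimization of the resulting optimal value over all realizations. Since the worst case is attained at $b=\uvr{b}$ and $D=\umace{D}$, I would start from $\unum{f}=\min\ f(A,\uvr{b},c,\umace{D})\st A\in\imace{A},\,c\in\ivr{c}$. Fix a realization $(A,c)$, let $x^*$ be its optimal solution and $s=\sgn(x^*)$; on the orthant of $s$ the constraints read $(A-\umace{D}\diag(s))x\le\uvr{b}$, and $B$-stability makes $B$ the optimal basis, so the basic constraints are active, $A_Bx^*-\umace{D}_B|x^*|=\uvr{b}_B$, with optimal value $c^Tx^*$. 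For this fixed $x^*$ the worst realization of $c$ gives the lower bound $\min_{c\in\ivr{c}}c^Tx^*=\Mid{c}^Tx^*-\Rad{c}^T|x^*|\le c^Tx^*$, which is the objective of \eqref{minPfPropBstabUf}. Moreover $x^*$ is feasible for \eqref{minPfPropBstabUf}: from $A_Bx^*=\uvr{b}_B+\umace{D}_B|x^*|$ with $A_B\in\imace{A}_B$ and $A_Bx^*\le(\Mid{A})_Bx^*+(\Rad{A})_B|x^*|$ we obtain $(\Mid{A})_Bx^*+(\Rad{A}-\umace{D})_B|x^*|\ge\uvr{b}_B$. Hence the right-hand side of \eqref{minPfPropBstabUf} is at most $g(x^*)\le c^Tx^*$ for every realization, and therefore at most $\unum{f}$.

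For the reverse inequality I would decompose \eqref{minPfPropBstabUf} orthant by orthant. On the orthant of a fixed sign vector $s$, writing $|x|=\diag(s)x$ turns it into the linear program $\min\ c_{-s}^Tx\st Mx\ge\uvr{b}_B,\ \diag(s)x\ge0$, where $M\coloneqq\bigl(\Mid{A}+(\Rad{A}-\umace{D})\diag(s)\bigr)_B$. A direct check using $0\le\umace{D}\le\omace{D}$ shows $\Mid{A}+(\Rad{A}-\umace{D})\diag(s)\in\imace{A}^*$, so $B$-stability of \eqref{ilp} guarantees that $M$ is nonsingular; consequently $\{x:Mx\ge\uvr{b}_B\}$ is a simplicial cone with apex $M^{-1}\uvr{b}_B$. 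The same $B$-stability furnishes the dual-feasibility sign condition $c_{-s}^TM^{-1}\ge0$, whence the linear objective attains its minimum over the cone at the apex, where all the kept inequalities hold with equality.

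At such an apex, equality in $(\Mid{A})_Bx+(\Rad{A}-\umace{D})_B|x|\ge\uvr{b}_B$ places $\uvr{b}_B+\umace{D}_B|x|$ at the upper end of the range $[(\Mid{A})_Bx-(\Rad{A})_B|x|,\,(\Mid{A})_Bx+(\Rad{A})_B|x|]$, hence above its lower end; therefore the apex is exactly the optimal basic vertex of the genuine realization $A=A_{e,-s}$, $c=c_{-s}$, since $M=(A_{e,-s}-\umace{D}\diag(s))_B$. Its objective value is thus the optimal value of a real AVLP instance, so it is at least $\unum{f}$. Minimizing over the finitely many sign vectors $s$ shows that the right-hand side of \eqref{minPfPropBstabUf} is at least $\unum{f}$, and together with the previous paragraph this proves equality (and, as a by-product, that $\unum{f}$ is attained at one of these $2^n$ realizations).

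The main obstacle is precisely this passage from the one-sided constraint of \eqref{minPfPropBstabUf} to a genuine realization: one must show that the discarded inequality $(\Mid{A})_Bx-(\Rad{A}+\umace{D})_B|x|\le\uvr{b}_B$ is redundant at the minimizer, which hinges on proving that the orthant minimum is attained at the cone apex. This needs $B$-stability in two distinct roles---nonsingularity of every orthant matrix $M$, so that the feasible region is a pointed simplicial cone rather than an unbounded polyhedron, and the dual-feasibility sign pattern $c_{-s}^TM^{-1}\ge0$, so that the linear objective does not run off to $-\infty$. A further technical point to pin down is sign consistency, namely that the apex $M^{-1}\uvr{b}_B$ indeed lies in the orthant of $s$ that defined $M$, so that it is a legitimate optimal solution; this should follow from $B$-stability but deserves an explicit verification.
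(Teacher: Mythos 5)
Your approach is genuinely different from the paper's, which never reasons about optimal solutions or orthants at all: it writes $f(A,\uvr{b},c,\umace{D})$ as the best case optimal value of the interval LP $[A-\umace{D},A+\umace{D}]x\le\uvr{b}$, passes to the dual, uses $B$-stability together with the Oettli--Prager theorem to reduce that dual to a single LP in $y$, dualizes back to obtain the key identity $f(A,\uvr{b},c,\umace{D})=\min\,c^Tx$ subject to $A_Bx-\umace{D}_B|x|\ge\uvr{b}_B$, and finally minimizes over $A\in\imace{A}$, $c\in\ivr{c}$ via the Gerlach theorem. That route needs no claim about \emph{where} optima sit, which is exactly where your argument runs into trouble.

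There are two gaps. (i) For the direction $\mathrm{RHS}\le\unum{f}$ you need an optimal solution $x^*$ of the realization satisfying $A_Bx^*-\umace{D}_B|x^*|=\uvr{b}_B$; equality, not just $\le$, is essential, since your feasibility chain is $(\Mid{A})_Bx^*+(\Rad{A}-\umace{D})_B|x^*|\ge A_Bx^*-\umace{D}_B|x^*|=\uvr{b}_B$. What $B$-stability actually gives is the basic vertex $\hat x=(A-\umace{D}\diag(s))_B^{-1}\uvr{b}_B$ of the orthant-$s$ linearization, with $A_B\hat x-\umace{D}_B\diag(s)\hat x=\uvr{b}_B$; to replace $\diag(s)\hat x$ by $|\hat x|$ you need $\sgn(\hat x)=s$, which $B$-stability does not supply. (ii) More seriously, in the direction $\mathrm{RHS}\ge\unum{f}$ the sentence \emph{``its objective value is thus the optimal value of a real AVLP instance''} is backwards. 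Even granting that the apex $x_s=M^{-1}\uvr{b}_B$ lies in orthant $s$, what follows from $M=(A_{e,-s}-\umace{D}\diag(s))_B$ and condition \eqref{lpBasis1} is only that $x_s$ is \emph{feasible} for the realization $(A_{e,-s},\uvr{b},c_{-s},\umace{D})$, hence $c_{-s}^Tx_s\le f(A_{e,-s},\uvr{b},c_{-s},\umace{D})$ --- a lower bound on that optimal value, which combined with $f\ge\unum{f}$ yields nothing. To conclude $c_{-s}^Tx_s\ge\unum{f}$ you would need $x_s$ to be a \emph{global} maximizer of that AVLP realization, i.e.\ that no other orthant of its nonconvex feasible set achieves a larger value; nothing in the proposal addresses this, and it does not follow from applying $B$-stability orthant by orthant. (The repair I see is precisely the paper's duality identity, which gives $f(A_{e,-s},\uvr{b},c_{-s},\umace{D})\le c_{-s}^Tx_s$ directly from $(A_{e,-s})_Bx_s-\umace{D}_B|x_s|=\uvr{b}_B$ --- but that identity is the entire content of the paper's proof.) You flag the sign-consistency of the apex as the remaining loose end, but the obstruction in (ii) is independent of it and is the one that actually breaks the argument.
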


\begin{proof}
Let $A\in\imace{A}$ and $c\in\ivr{c}$. Then the optimal value of the AVLP problem
\begin{align*}
f(A,c)=\max\ c^Tx \st Ax-\umace{D}|x|\leq \uvr{b}
\end{align*}
is the best case optimal value of the interval LP problem
\begin{align*}
\max\ c^Tx \st [A-\umace{D},A+\umace{D}]x\leq \uvr{b},
\end{align*}
or, equivalently, the worst case optimal value of its dual
\begin{align*}
\min\ \uvr{b}^Ty \st [A-\umace{D},A+\umace{D}]^Ty=c,\ y\geq0.
\end{align*}
Due to $B$-stability, the optimal value can be expressed as
\begin{align*}
f(A,c)
 &= \max\ \uvr{b}_B^T\tilde{A}_B^{-T}c \st  \tilde{A}\in[A-\umace{D},A+\umace{D}] \\
 &= \max\ \uvr{b}_B^Ty \st \tilde{A}_B^Ty=c,\ y\geq0,\ \tilde{A}\in[A-\umace{D},A+\umace{D}].
\end{align*}
Using the Oettli-Prager theorem \cite{OetPra1964} characterizing the solution set of an interval system of linear equations (see also \cite{Hla2013b,Roh2006a}), the problem takes the form of an LP problem
\begin{align*}
f(A,c)
 = \max\ \uvr{b}_B^Ty \st (A-\umace{D})_B^Ty\leq c\leq (A+\umace{D})_B^Ty,\ y\geq0.
\end{align*}
Its dual problem reads as
\begin{align}\nonumber
f(A,c)
 &=\min\ c^Tx_1-c^Tx_2 \st (A-\umace{D})_Bx_1-(A+\umace{D})_Bx_2\geq \uvr{b}_B,\ x_1,x_2\geq0\\
 &=\min\ c^T(x_1-x_2) \st A_B(x_1-x_2)-\umace{D}_B(x_1+x_2)\geq \uvr{b}_B,\ x_1,x_2\geq0.
 \label{minPfPropBstabUfNonneg}
\end{align}
We claim that this dual problem has an equivalent form
\begin{align}\label{minPfPropBstabUfAbs}
f(A,c)
 =\min\ c^Tx \st A_Bx-\umace{D}_B|x|\geq \uvr{b}_B.
\end{align}
Indeed, if $x$ is a feasible solution to \eqref{minPfPropBstabUfAbs}, then $x_1\coloneqq\max(x,0)$ and $x_2\coloneqq\max(-x,0)$ form a feasible solution to \eqref{minPfPropBstabUfNonneg} with the same objective value. Conversely, let $x_1,x_2$ be feasible to \eqref{minPfPropBstabUfNonneg} and define $z\coloneqq\min(x_1,x_2)\geq0$. Then $\tilde{x}_1\coloneqq x_1-z$ and $\tilde{x}_2\coloneqq x_2-z$ form also a feasible solution with the same objective value and satisfying the complementarity $(\tilde{x}_1)^T\tilde{x}_2=0$. Then $x\coloneqq \tilde{x}_1-\tilde{x}_2$ is feasible to \eqref{minPfPropBstabUfAbs} with the same objective value. 

Now, 
\begin{align*}
\unum{f}
 &= \min\ f(A,c) \st A\in\imace{A},\,c\in\ivr{c} \\
 &= \min\ \parentheses*{\min_{c\in\ivr{c}}\, c^Tx} 
  \st A_Bx-\umace{D}_B|x|\geq \uvr{b}_B,\ A\in\imace{A}.
\end{align*}
Using the Gerlach theorem \cite{Ger1981} describing the solution set of interval system of linear inequalities (cf.\ \cite{Hla2013b,Roh2006a}), we get~\eqref{minPfPropBstabUf}.
\end{proof}

Obviously, if $\Rad{c}=0$ and $(\Rad{A})_B\leq\umace{D}_B$, then \eqref{minPfPropBstabUf} can be expressed as an LP problem and the computation of $\unum{f}$ becomes a polynomial time problem.

It turns out important also to derive a specific formula for the lower bound $\unum{f}^L$ on~$\unum{f}$. We present it in the following proposition. We can observe that, in the case of basis stability, $\unum{f}^L$ is efficiently solvable by means of linear programming  provided $\Rad{c}=0$ and $(\Rad{A})_B\geq\umace{D}_B$.

\begin{proposition}
Under $B$-stability, we have
\begin{align}\label{minPfPropBstabUfL}
\unum{f}^L
 =\max\ \Mid{c}^Tx-\Rad{c}^T|x| \st (\Mid{A})_Bx+(\Rad{A}-\umace{D})_B|x| \leq \uvr{b}_B.
\end{align}
\end{proposition}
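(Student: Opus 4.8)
The goal is to transform the definition
$$\unum{f}^L = \max\ \Mid{c}^Tx-\Rad{c}^T|x| \st \Mid{A}x+(\Rad{A}-\umace{D})|x| \leq \uvr{b}$$
from \eqref{dfUfL} into the $B$-restricted form \eqref{minPfPropBstabUfL} by exploiting $B$-stability. The plan is to mirror the strategy already used for $\onum{f}$ in Corollary~\ref{corOnumIlpBest} together with the duality/basis-stability manipulations from the proof of Proposition~\ref{propBstabUf}, but now carried out on the \emph{lower-bound} system rather than on the original interval AVLP problem.

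First I would recast \eqref{dfUfL} as the best case optimal value of an interval LP problem: observe that $\Mid{A}x+(\Rad{A}-\umace{D})|x|\le\uvr{b}$ is exactly the united-solution-set description (in the sense of the Oettli--Prager/Gerlach characterizations recalled in the excerpt) of the interval system $[\Mid{A}-(\Rad{A}-\umace{D}),\,\Mid{A}+(\Rad{A}-\umace{D})]x\le\uvr{b}$, while $\Mid{c}^Tx-\Rad{c}^T|x|=\min_{c\in\ivr c}c^Tx$. Hence $\unum{f}^L$ equals the best case optimal value of the interval LP problem $\max\,\ivr{c}^Tx$ subject to $\imace{A}'x\le\uvr{b}$ with $\imace{A}'=[\Mid{A}-\Rad{A}+\umace{D},\,\Mid{A}+\Rad{A}-\umace{D}]$. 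The key point is that this interval matrix $\imace{A}'$ has the same midpoint $\Mid{A}$ as $\imace{A}^*$ from Corollary~\ref{corOnumIlpBest} and is contained in it (its radius is $|\Rad{A}-\umace{D}|\le\Rad{A}+\omace{D}$ entrywise, using $\umace D\ge0$); therefore $B$-stability of \eqref{ilp} propagates to $B$-stability of this relaxed interval LP. Then, exactly as in the proof of Proposition~\ref{propBstabUf}, passing to the dual $\min\,\ivr{b}^Ty$ s.t.\ $(\imace{A}')^Ty=\ivr{c}$, $y\ge0$, and invoking Beeck's characterization of the best/worst case optimal value under basis stability, I obtain a formula involving only the rows indexed by $B$, which after writing it back as an LP with the absolute-value constraint collapses to \eqref{minPfPropBstabUfL}.

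Alternatively, and perhaps more transparently, one can bypass the intermediate interval LP: start directly from Proposition~\ref{propFLtight}'s identity $\unum{f}^L=f(A_{e,-s^*},c_{-s^*})$, apply the basis-stable formula \eqref{minPfPropBstabUfAbs}--\eqref{minPfPropBstabUf} to that particular realization to get $f(A_{e,-s^*},c_{-s^*})=\min\,c_{-s^*}^Tx$ s.t.\ $(A_{e,-s^*})_Bx-\umace{D}_B|x|\ge\uvr{b}_B$, and then argue that the extra structure of $A_{e,-s^*}=\Mid{A}+\Rad{A}\diag(s^*)$ and $c_{-s^*}$ turns this min into the max in \eqref{minPfPropBstabUfL} on the relevant orthant $\diag(s^*)x\ge0$, as was done at the end of the proof of Proposition~\ref{propFLtight}.

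The main obstacle I anticipate is the sign bookkeeping when the radius $\Rad A$ and the matrix $\umace D$ are compared entrywise: the quantity $\Rad{A}-\umace{D}$ need not be nonnegative, so the interval matrix $\imace{A}'$ must be written with radius $|\Rad{A}-\umace{D}|$, and one has to check carefully that the Oettli--Prager description of its solution set really reproduces $\Mid{A}x+(\Rad{A}-\umace{D})|x|\le\uvr{b}$ with the \emph{signed} coefficient rather than its absolute value — this works precisely because the absolute-value term $|x|$ already carries the sign adjustment, but it deserves an explicit line. The second delicate point is verifying that $B$-stability of the original problem \eqref{ilp} is inherited by the relaxed problem here; since its interval matrix is a subset of $\imace{A}^*$ with the same midpoint, every realization is also a realization of \eqref{ilp}, so $B$ remains an optimal basis for each of them, and likewise for the duals — but this containment argument should be stated, not taken for granted.
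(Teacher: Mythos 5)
There is a genuine gap in both routes you sketch, and it comes down to the sign semantics of the term $(\Rad{A}-\umace{D})_{ij}|x_j|$. In the Gerlach/Oettli--Prager picture, a constraint $\Mid{A}x-R|x|\le\uvr{b}$ with nonnegative radius $R$ describes the \emph{united} (weak) solution set, whereas $\Mid{A}x+R|x|\le\uvr{b}$ describes the \emph{strong} one (feasibility for all realizations). In \eqref{dfUfL} the coefficient of $|x|$ is $+(\Rad{A}-\umace{D})$: the $+\Rad{A}|x|$ part has ``strong'' polarity (it encodes feasibility for every $A\in\imace{A}$) while the $-\umace{D}|x|$ part is the genuine AVLP absolute value, and additionally the objective is $\Mid{c}^Tx-\Rad{c}^T|x|=\min_{c\in\ivr{c}}c^Tx$, a worst case over $c$ inside a max over $x$. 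So \eqref{dfUfL} is a max--min, not the best case optimal value of any single interval LP, and your proposed $\imace{A}'$ with radius $|\Rad{A}-\umace{D}|$ reproduces the constraint only when $\umace{D}\ge\Rad{A}$ entrywise (and even then the ``best case'' would attach $+\Rad{c}^T|x|$ to the objective, not $-\Rad{c}^T|x|$). The observation that the $|x|$ term ``already carries the sign adjustment'' does not rescue this: a single interval system can never produce a signed mixture of $+$ and $-$ polarities across entries. The paper resolves exactly this difficulty by a \emph{two-level nesting}: it first isolates $-\umace{D}|x|$ as an outer best case over $\tilde{A}\in[\Mid{A}-\umace{D},\Mid{A}+\umace{D}]$, then for each fixed $\tilde{A}$ reads $\tilde{A}x+\Rad{A}|x|\le\uvr{b}$ together with $\min_c c^Tx$ as the \emph{worst} case of an interval LP, dualizes it to a best-case dual where Oettli--Prager and $B$-stability apply, dualizes back to recover an absolute-value program over the rows in $B$, and only then recombines the outer maximization over $\tilde{A}$ to restore the coefficient $(\Rad{A}-\umace{D})_B$. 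Without this separation your reduction produces the wrong feasible set whenever $\Rad{A}-\umace{D}$ has a positive entry.

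Your alternative route is also not available: the identity $\unum{f}^L=f(A_{e,-s^*},c_{-s^*})$ is the \emph{conclusion} of Proposition~\ref{propFLtight} and holds only under its hypothesis that the optimizer of \eqref{maxPropFuLuEq} has sign $s^*$; that hypothesis is not known to hold at this stage (it is ultimately a consequence of Theorem~\ref{thmBstabUf}, whose proof depends on the present proposition), so starting from it would be circular. The surrounding scaffolding you identify --- dualizing, invoking Oettli--Prager and the inheritance of $B$-stability by sub-interval matrices with the same midpoint --- is the right toolbox, but the concrete reduction that makes it work is the nested best-case/worst-case decomposition, which is missing from the proposal.
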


\begin{proof}
Reformulate the problem \eqref{dfUfL} as
\begin{align*}
\unum{f}^L = \max\ \Mid{c}^Tx-\Rad{c}^Tz 
 \st \Mid{A}x+\Rad{A}z-\umace{D}|x| \leq \uvr{b},\ -z\leq x\leq z.
\end{align*}
Then $\unum{f}^L$ represents the best case optimal value of the interval LP problem
\begin{align*}
\max\ \Mid{c}^Tx-\Rad{c}^Tz
 \st [\Mid{A}-\umace{D},\Mid{A}+\umace{D}]x+\Rad{A}z \leq \uvr{b},\ -z\leq x\leq z,
\end{align*}
which we can expressed as 
\begin{align*}
\max\ \Mid{c}^Tx-\Rad{c}^T|x|
 \st [\Mid{A}-\umace{D},\Mid{A}+\umace{D}]x+\Rad{A}|x| \leq \uvr{b}.
\end{align*}
Let $\tilde{A}\in[\Mid{A}-\umace{D},\Mid{A}+\umace{D}]$ and consider the particular realization
\begin{align*}
\unum{f}^L(\tilde{A}) = \max\ \Mid{c}^Tx-\Rad{c}^T|x|
 \st \tilde{A}x+\Rad{A}|x| \leq \uvr{b}.
\end{align*}
This value represents the worst case optimal value of the interval LP problem
\begin{align*}
\max\ \ivr{c}^Tx  \st [\tilde{A}-\Rad{A},\tilde{A}+\Rad{A}]x \leq \uvr{b},
\end{align*}
or, equivalently, the best case optimal value of the dual
\begin{align*}
\min\ \uvr{b}^Ty  \st [\tilde{A}-\Rad{A},\tilde{A}+\Rad{A}]^Ty=\ivr{c},\ y\geq0.
\end{align*}
In view of $B$-stability and using again the Oettli-Prager theorem, we can write 
\begin{align*}
\unum{f}^L(\tilde{A}) 
 &=\min\ \uvr{b}_B^TA'{}_B^{-T}c 
    \st A'\in[\tilde{A}-\Rad{A},\tilde{A}+\Rad{A}],\,c\in\ivr{c} \\
 &=\min\ \uvr{b}_B^Ty 
    \st A'{}_B^Ty=c,\ y\geq0,\ A'\in[\tilde{A}-\Rad{A},\tilde{A}+\Rad{A}],\,c\in\ivr{c} \\
 &=\min\ \uvr{b}_B^Ty 
    \st (\tilde{A}+\Rad{A})^T_By\geq\uvr{c},\ (\tilde{A}-\Rad{A})^T_By\leq\ovr{c},\ y\geq0.
\end{align*}
By duality in linear programming,
\begin{align*}
\unum{f}^L(\tilde{A}) 
 ={}&\max\ \uvr{c}^Tx_1-\ovr{c}^Tx_2 \\
    &\stl (\tilde{A}+\Rad{A})_Bx_1-(\tilde{A}-\Rad{A})_Bx_2\leq\uvr{b}_B,\ x_1,x_2\geq0 \\
 ={}&\max\ \Mid{c}^T(x_1-x_2)-\Rad{c}^T(x_1+x_2) \\
    &\stl \tilde{A}_B(x_1-x_2)+(\Rad{A})_B(x_1+x_2)\leq\uvr{b}_B,\ x_1,x_2\geq0.
\end{align*}
Using the same reasoning that showed equivalence between \eqref{minPfPropBstabUfNonneg} and \eqref{minPfPropBstabUfAbs}, we obtain that
\begin{align*}
\unum{f}^L(\tilde{A}) 
 = \max\ \Mid{c}^Tx-\Rad{c}^T|x| \st \tilde{A}_Bx+(\Rad{A})_B|x| \leq \uvr{b}_B.
\end{align*}
Now,
\begin{align*}
\unum{f}^L
 &=\max\ \unum{f}^L(\tilde{A}) \st \tilde{A}\in[\Mid{A}-\umace{D},\Mid{A}+\umace{D}] \\
 &=\max\ \Mid{c}^Tx-\Rad{c}^T|x| \st \tilde{A}_Bx+(\Rad{A})_B|x| \leq \uvr{b}_B,\ 
    \tilde{A}\in[\Mid{A}-\umace{D},\Mid{A}+\umace{D}] \\
 &=\max\ \Mid{c}^Tx-\Rad{c}^T|x| \st (\Mid{A})_Bx+(\Rad{A}-\umace{D})_B|x| \leq \uvr{b}_B.
\qedhere
\end{align*}
\end{proof}

It turns out that the inequalities in \eqref{minPfPropBstabUf} and \eqref{minPfPropBstabUfL} are attained as equations, which results in Theorem~\ref{thmBstabUfLEq}, and then a more specific result in Theorem~\ref{thmBstabUf}.

\begin{theorem}\label{thmBstabUfLEq}
Under nondegenerate $B$-stability, we have
\begin{align}\label{minPfPropBstabUfLEq}
\unum{f}^L
 =\max\ \Mid{c}^Tx-\Rad{c}^T|x| \st (\Mid{A})_Bx+(\Rad{A}-\umace{D})_B|x| = \uvr{b}_B.
\end{align}
\end{theorem}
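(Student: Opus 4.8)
The plan is to prove the one nontrivial direction, that the optimal value of \eqref{minPfPropBstabUfLEq} is at least $\unum{f}^L$; the reverse inequality is immediate, since the feasible set of \eqref{minPfPropBstabUfLEq} is contained in that of \eqref{minPfPropBstabUfL}. Write $g(x)\coloneqq\Mid{c}^Tx-\Rad{c}^T|x|$ for the common objective. The idea is to take an optimal $x^*$ of \eqref{minPfPropBstabUfL}, choose the vertex of the matrix box $[\Mid{A}-\umace{D},\Mid{A}+\umace{D}]$ adapted to the sign pattern $s^*\coloneqq\sgn(x^*)$, and then force all $B$-constraints to become active by strict complementary slackness, which is exactly what nondegeneracy supplies.

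I would first recall the two ingredients produced in the proof of \eqref{minPfPropBstabUfL}: that $\unum{f}^L=\max\{\unum{f}^L(\tilde{A})\mmid\tilde{A}\in[\Mid{A}-\umace{D},\Mid{A}+\umace{D}]\}$, where $\unum{f}^L(\tilde{A})=\max\ g(x)\st\tilde{A}_Bx+(\Rad{A})_B|x|\leq\uvr{b}_B$, and that this real AVLP subproblem is, through the Oettli--Prager reformulation, an LP whose dual reads $\min\{\uvr{b}_B^Ty\mmid(\tilde{A}+\Rad{A})_B^Ty\geq\uvr{c},\,(\tilde{A}-\Rad{A})_B^Ty\leq\ovr{c},\,y\geq0\}$, with optimum attained at $y=(A'_B)^{-T}c$ for the minimizing realization $A'\in[\tilde{A}-\Rad{A},\tilde{A}+\Rad{A}]$ and $c\in\ivr{c}$. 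Now take $x^*$ optimal for \eqref{minPfPropBstabUfL}, so $g(x^*)=\unum{f}^L$, and set $\tilde{A}^\circ_B\coloneqq(\Mid{A})_B-(\umace{D})_B\diag(s^*)$, extending $\tilde{A}^\circ$ arbitrarily in the box off the rows $B$ (legitimate, since $|\diag(s^*)|=I_n$). Because $\diag(s^*)x^*=|x^*|$, we get $\tilde{A}^\circ_Bx^*+(\Rad{A})_B|x^*|=(\Mid{A})_Bx^*+(\Rad{A}-\umace{D})_B|x^*|\leq\uvr{b}_B$, so $x^*$ is feasible for $\unum{f}^L(\tilde{A}^\circ)$; combined with $\unum{f}^L(\tilde{A}^\circ)\leq\unum{f}^L=g(x^*)$, this shows that $x^*$ is in fact \emph{optimal} for the subproblem $\unum{f}^L(\tilde{A}^\circ)$.

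The core step is then strict complementary slackness. I would show that the dual optimum $y^\circ$ of $\unum{f}^L(\tilde{A}^\circ)$ is strictly positive: the minimizing realization satisfies $A'\in[\tilde{A}^\circ-\Rad{A},\tilde{A}^\circ+\Rad{A}]\subseteq\imace{A}^*$ (using $\umace{D}\leq\omace{D}$), so under nondegenerate $B$-stability the strict form of \eqref{lpBasis2} yields $y^\circ=(A'_B)^{-T}c>0$. Passing $x^*$ to the LP variables $x_1\coloneqq\max(x^*,0)$, $x_2\coloneqq\max(-x^*,0)$ produces an LP-optimal pair, and complementary slackness against $y^\circ>0$ makes every primal constraint active, i.e.\ $\tilde{A}^\circ_Bx^*+(\Rad{A})_B|x^*|=\uvr{b}_B$, which is precisely $(\Mid{A})_Bx^*+(\Rad{A}-\umace{D})_B|x^*|=\uvr{b}_B$. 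Hence $x^*$ is feasible for \eqref{minPfPropBstabUfLEq} with value $\unum{f}^L$, and the two directions give the claimed identity.

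The hard part will be turning \quo{nondegenerate optimal basis} into \quo{strictly positive dual multipliers}: this requires verifying that the sign-adapted realization $A'$ representing $y^\circ$ genuinely lies in $\imace{A}^*$, so that the nondegeneracy hypothesis applies, and then transporting complementary slackness correctly through the $x\leftrightarrow(x_1,x_2)$ reformulation (so that activity of the LP constraints translates into the absolute-value equality). The remainder is bookkeeping around the choice of the box vertex $\tilde{A}^\circ$ matched to $\sgn(x^*)$, which is what lets a single real subproblem realize the slack inequality of \eqref{minPfPropBstabUfL} at $x^*$ as its defining equality.
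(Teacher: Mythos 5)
Your proof is correct, but it takes a genuinely different route from the paper's. The paper argues primally: it picks an optimal $x^*$ of \eqref{minPfPropBstabUfL} maximizing the number of active constraints, writes the optimality conditions of the LP restricted to each orthant containing $x^*$, and takes a convex combination over those orthants to cancel the multipliers of the sign constraints $\diag(s)x\geq0$ at the zero components of $x^*$; the resulting identity expresses some $c^*\in\ivr{c}$ as a nonnegative combination of only the active rows of a realization of $\imace{A}^*_B$, which contradicts nondegeneracy unless all rows are active. You instead fix the single box vertex $\tilde{A}^\circ_B=(\Mid{A})_B-(\umace{D})_B\diag(\sgn x^*)$, note that $x^*$ is then optimal for the real subproblem $\unum{f}^L(\tilde{A}^\circ)$, and read strict positivity of the dual optimum $y^\circ=(A'_B)^{-T}c$ directly off the strict form of \eqref{lpBasis2}, after the (correct) check that $[\tilde{A}^\circ-\Rad{A},\tilde{A}^\circ+\Rad{A}]\subseteq\imace{A}^*$, which uses $0\leq\umace{D}\leq\omace{D}$; ordinary complementary slackness against $y^\circ>0$ then forces every constraint of the $(x_1,x_2)$-LP, hence of the absolute-value system, to be active. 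This sidesteps the orthant case analysis entirely (the sign convention at zero entries of $x^*$ is harmless in your construction, since $\diag(s^*)x^*=|x^*|$ regardless), it yields the slightly stronger conclusion that \emph{every} optimal solution of \eqref{minPfPropBstabUfL} saturates all constraints (the paper must select a special one), and it isolates exactly where nondegeneracy enters. The price is heavier reliance on the primal--dual LP chain and the Oettli--Prager reformulation built in the proof of \eqref{minPfPropBstabUfL}, but those ingredients are already established there. One terminological nit: what you use is ordinary complementary slackness combined with a strictly positive dual optimum supplied by nondegeneracy, not the strict complementary slackness theorem of linear programming.
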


\begin{proof}
The value of $\unum{f}^L$ is attained at some optimal solution $x^*$ of problem~\eqref{minPfPropBstabUfL}. Indeed, basis stability ensures that the problem is not unbounded and also that it is feasible (even more, we will see later in \eqref{gaveThmUf} that a solution exists such that all inequalities are satisfied as equations).

In case the optimal solution is not unique, take that one which maximizes the number of inequalities satisfied as equations. Denote 
$K\coloneqq\{k\mmid x_k^*=0\}$, and
\begin{align*}
\mna{S} &\coloneqq \{s\in\{\pm1\}^n\mmid \diag(s)x^*\geq0\},\\
I &\coloneqq 
 \braces*{i\mmid \parentheses*{(\Mid{A})_Bx^*+(\Rad{A}-\umace{D})_B|x^*|}_i = (\uvr{b}_B)_i}.
\end{align*}
The set $\mna{S}$ corresponds to all orthants containing the point~$x^*$, and $I$ is the set of active constraints for~$x^*$. 
Let $s\in\mna{S}$, and consider the problem \eqref{minPfPropBstabUfL} restricted to orthant~$s$:
\begin{align}\label{pfPropBstabUfLEqMaxDs}
 \max\ c_{-s}^Tx \st 
   \parentheses*{\Mid{A}+(\Rad{A}-\umace{D})\diag(s)}_Bx \leq \uvr{b}_B,\ \diag(s)x\geq0.
\end{align}
Since $x^*$ is its optimal solution, the objective vector is a nonnegative combination of the normals of active constraints, i.e.,
\begin{align}\label{pfPropBstabUfLEqS}
c_{-s} = \big(\Mid{A}+(\Rad{A}-\umace{D}\big)\diag(s))^T_I u^s - \sum_{k\in K}s_ke_k v^s_k
\end{align}
for some $u^s\geq0$ and $v^s_K\geq0$. 

Since the origin lies in the convex hull of the points $\sum_{k\in K}s_ke_i v^s_i$, $s\in\mna{S}$, there is their convex combination producing zero. That is, there are $\lambda_s\geq0$, $\sum_{s\in\mna{S}}\lambda_s=1$, such that
\begin{align}\label{pfPropBstabUfLEqConv}
\sum_{s\in\mna{S}}\lambda_s c_{-s}
 = \sum_{s\in\mna{S}}\lambda_s \big(\Mid{A}+(\Rad{A}-\umace{D})\diag(s)\big)^T_I u^s.
\end{align}

Denote
\begin{align*}
c^* &\coloneqq \sum_{s\in\mna{S}}\lambda_s c_{-s},\\
u^* &\coloneqq \sum_{s\in\mna{S}}\lambda_s u^s \geq 0.
\end{align*}
Then there is $z\in[-1,1]^n$ such that equation \eqref{pfPropBstabUfLEqConv} reads as
\begin{align}\label{pfPropBstabUfLEqCstar}
c^* = \big(\Mid{A}+(\Rad{A}+\umace{D})\diag(z)\big)^T_I u^*.
\end{align}
Due to $B$-stability, each realization of $\ivr{c}$ is a nonnegative combination of rows of any realization of $\imace{A}^*_B$. That is,
\begin{align}\label{pfPropBstabUfLEqBstab}
\forall c\in\ivr{c},\ \forall A^*_B\in\imace{A}^*_B,\ \exists u\geq0: 
 c=(A^*)_B^Tu.
\end{align}
In \eqref{pfPropBstabUfLEqCstar}, the combination uses only the rows indexed by~$I$, so that the other coefficients $u_i$ are zero. This is in contradiction with the nondegeneracy of basis~$B$. Therefore, $I=\seznam{n}$.
\end{proof}

In a similar way we tightened the inequalities to equations in the optimization problem for $\unum{f}^L$, we can obtain equality constraints for~$\unum{f}$.

\begin{corollary}\label{corBstabUf}
Under nondegenerate $B$-stability, we have
\begin{align}\label{minPfPropBstabUfEq}
\unum{f} =  
 \min\ \Mid{c}^Tx-\Rad{c}^T|x| \st (\Mid{A})_Bx+(\Rad{A}-\umace{D})_B|x| = \uvr{b}_B.
\end{align}
\end{corollary}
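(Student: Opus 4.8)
The plan is to mirror the argument of Theorem~\ref{thmBstabUfLEq}, but now applied to the minimization problem~\eqref{minPfPropBstabUf} of Proposition~\ref{propBstabUf}. First, I would argue that the value $\unum{f}$ is actually attained at some optimal solution $x^*$ of~\eqref{minPfPropBstabUf}: nondegenerate $B$-stability rules out unboundedness (otherwise some realization of the interval LP would be unbounded, contradicting $B$-optimality for every realization) and guarantees feasibility. Among all optimizers, I would pick $x^*$ maximizing the number of constraints $\big((\Mid{A})_Bx^*+(\Rad{A}-\umace{D})_B|x^*|\big)_i \geq (\uvr{b}_B)_i$ that hold with equality, and let $I$ denote that active set, $K\coloneqq\{k\mmid x_k^*=0\}$, and $\mna{S}\coloneqq\{s\in\{\pm1\}^n\mmid \diag(s)x^*\geq0\}$ as before.

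Next, within each orthant $s\in\mna{S}$, problem~\eqref{minPfPropBstabUf} becomes the linear program of minimizing $c_s^Tx$ subject to $\big(\Mid{A}+(\Rad{A}-\umace{D})\diag(s)\big)_B x \geq \uvr{b}_B$ and $\diag(s)x\geq0$ (here the sign flip in the objective from $\Mid{c}^Tx-\Rad{c}^T|x|$ lands on $c_s$ rather than $c_{-s}$ because the sign of $|x|$ in the objective is reversed relative to~\eqref{minPfPropBstabUfL}). Since $x^*$ is optimal, the KKT conditions give $c_s$ as a nonnegative combination of the active normals $-\big(\Mid{A}+(\Rad{A}-\umace{D})\diag(s)\big)^T_I u^s$ plus the bound normals $\sum_{k\in K}s_k e_k v^s_k$, with $u^s,v^s_K\geq0$. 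As in the theorem, $0$ lies in the convex hull of the bound-contributions $\sum_{k\in K}s_ke_kv^s_k$ over $s\in\mna{S}$, so a convex combination with weights $\lambda_s$ eliminates them; setting $c^*\coloneqq\sum_s\lambda_s c_s$ and $u^*\coloneqq\sum_s\lambda_s u^s\geq0$, one obtains $c^* = \big(\Mid{A}+(\Rad{A}+\umace{D})\diag(z)\big)^T_I u^*$ for some $z\in[-1,1]^n$, i.e.\ a realization $c^*\in\ivr{c}$ expressed as a nonnegative combination of rows of a realization of $\imace{A}^*_B$ using only the indices in~$I$. Since nondegenerate $B$-stability forces every such representation \eqref{pfPropBstabUfLEqBstab} to have all multipliers strictly relevant, the indices outside $I$ cannot all carry zero weight, which is a contradiction unless $I=\seznam{n}$; hence all constraints in~\eqref{minPfPropBstabUf} are active at $x^*$, giving~\eqref{minPfPropBstabUfEq}.

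The main obstacle I anticipate is getting the orientation of the inequalities and the direction of the KKT multipliers exactly right: unlike Theorem~\ref{thmBstabUfLEq}, here we minimize rather than maximize and the constraint is $\geq$ rather than $\leq$, so the geometric picture (objective in the cone of active normals) is the mirror image, and I must be careful that the convex-hull-contains-the-origin step for the $K$-indexed terms still goes through with the correct signs. A secondary subtlety is justifying that the minimum in~\eqref{minPfPropBstabUf} is attained and that the "most equality constraints active" selection is well posed; both follow from nondegenerate $B$-stability exactly as invoked in the proof of Theorem~\ref{thmBstabUfLEq}, and in fact the forthcoming formula~\eqref{gaveThmUf} will exhibit an optimizer with all constraints active, so the corollary can also be seen as a step toward Theorem~\ref{thmBstabUf}.
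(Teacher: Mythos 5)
Your overall strategy---rerunning the KKT/convex-combination argument of Theorem~\ref{thmBstabUfLEq} directly on the minimization problem \eqref{minPfPropBstabUf}---is viable, but it is not what the paper does, and your execution of the sign bookkeeping (the very step you flag as the main risk) goes wrong in two places. The paper sidesteps the issue entirely: it substitutes $x\mapsto -x$, which turns \eqref{minPfPropBstabUf} into $\unum{f} = -\max\ \Mid{c}^Tx+\Rad{c}^T|x|$ subject to $(\Mid{A})_Bx-(\Rad{A}-\umace{D})_B|x| \leq -\uvr{b}_B$, a maximization over $\leq$-constraints of exactly the shape treated in Theorem~\ref{thmBstabUfLEq}, and then literally reuses that proof. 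Your direct route buys nothing over this one-line reduction and is where the errors creep in.

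Concretely: (i) restricted to the orthant $\diag(s)x\geq0$, the objective of \eqref{minPfPropBstabUf} is $c_{-s}^Tx$, not $c_s^Tx$. The objective $\Mid{c}^Tx-\Rad{c}^T|x|$ is \emph{identical} to that of \eqref{minPfPropBstabUfL}---the sign of $|x|$ is not reversed; only the optimization direction and the constraint sense change---and $\Mid{c}-\diag(s)\Rad{c}=c_{-s}$. (ii) The stationarity condition for \emph{minimizing} a linear objective over $\{x \mmid Gx\geq h,\ \diag(s)x\geq 0\}$ is $c_{-s}=G_I^Tu^s+\sum_{k\in K}s_ke_kv^s_k$ with $u^s,v^s_K\geq0$ (test it on $\min x$ subject to $x\geq0$); what you wrote---the objective vector as a nonnegative combination of the \emph{negated} active rows $-G_I$ plus the bound normals---is the optimality condition for a maximizer. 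Your own next step exposes the inconsistency: from your stated relation the convex combination would produce $c^*=-\bigl(\Mid{A}+(\Rad{A}+\umace{D})\diag(z)\bigr)^T_Iu^*$, yet you record it without the minus sign. Had you kept the minus sign, the nondegeneracy argument would yield a contradiction even when $I=\seznam{n}$ (the unique multiplier vector of a realization would be nonpositive while nondegeneracy forces it strictly positive), i.e., it would ``prove'' that no optimum exists at all. With the corrected relations (i)--(ii), one gets $c^*=\sum_{s}\lambda_s c_{-s}=c_{-\bar s}\in\ivr{c}$ with $\bar s=\sum_s\lambda_s s$, the representation $c^*=\bigl(\Mid{A}+(\Rad{A}+\umace{D})\diag(z)\bigr)^T_Iu^*$ with $u^*\geq0$ as in \eqref{pfPropBstabUfLEqCstar}, and the conclusion $I=\seznam{n}$ follows exactly as in the theorem. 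So the proof is repairable, but as written the key intermediate step is wrong and inconsistent with the relation you derive from it.
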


\begin{proof}
First, we transform the problem \eqref{minPfPropBstabUf} to the form of
\begin{align*}
\unum{f} =  
 -\max\ \Mid{c}^Tx+\Rad{c}^T|x| \st (\Mid{A})_Bx-(\Rad{A}-\umace{D})_B|x| \leq -\uvr{b}_B,
\end{align*}
and then we use the same reasoning as in the proof of Theorem~\ref{thmBstabUfLEq}.
\end{proof}

The feasible sets in \eqref{minPfPropBstabUfLEq} and \eqref{minPfPropBstabUfEq} are described by the GAVE system
\begin{align}\label{gaveThmUf}
(\Mid{A})_Bx+(\Rad{A}-\umace{D})_B|x| = \uvr{b}_B.
\end{align}
$B$-stability implies regularity of the interval matrix $\imace{A}^*_B$, which in turn causes that \eqref{gaveThmUf} has a unique solution; see \cite{KumDee2024,Roh2009,WuShe2021}. We denote this solution as $x^*$ and show that it is the minimizer for which $\unum{f}$ is achieved.

\begin{theorem}\label{thmBstabUf}
Under nondegenerate $B$-stability, the unique solution $x^*$ of \eqref{gaveThmUf} is the optimal solution of problems \eqref{dfUfL}, \eqref{minPfPropBstabUf}, \eqref{minPfPropBstabUfL}, \eqref{minPfPropBstabUfLEq} and \eqref{minPfPropBstabUfEq}. Further,
\begin{align*}
\unum{f} = \unum{f}^L =  \Mid{c}^Tx^*-\Rad{c}^T|x^*|.
\end{align*}
\end{theorem}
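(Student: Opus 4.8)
The starting point is the observation made just before the theorem: nondegenerate $B$-stability forces $\imace{A}^*_B$ to be regular, so the GAVE system \eqref{gaveThmUf} has a unique solution~$x^*$. Consequently the feasible sets of the equality-constrained problems \eqref{minPfPropBstabUfLEq} and \eqref{minPfPropBstabUfEq} are both the singleton $\{x^*\}$, whence $x^*$ is trivially their unique optimal solution, and Theorem~\ref{thmBstabUfLEq} together with Corollary~\ref{corBstabUf} give at once
\[
\unum{f}^L = \Mid{c}^Tx^*-\Rad{c}^T|x^*| = \unum{f}.
\]

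Next I would dispatch the two inequality-constrained problems \eqref{minPfPropBstabUf} and \eqref{minPfPropBstabUfL}. Since $x^*$ solves \eqref{gaveThmUf}, it in particular satisfies $(\Mid{A})_Bx^*+(\Rad{A}-\umace{D})_B|x^*| \leq \uvr{b}_B$ as well as the reverse inequality, so it is feasible for both \eqref{minPfPropBstabUfL} and \eqref{minPfPropBstabUf}. The propositions preceding the theorem identify the optimal values of these two problems with $\unum{f}^L$ and $\unum{f}$, which by the previous paragraph both equal the objective value $\Mid{c}^Tx^*-\Rad{c}^T|x^*|$ attained at~$x^*$; hence $x^*$ is an optimal solution of each.

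The only step that is not mere bookkeeping is showing that $x^*$ is optimal for the original problem \eqref{dfUfL}, whose feasibility constraint ranges over all $m$ rows rather than only those indexed by~$B$. Let $s^*$ be the sign vector of~$x^*$, so that $|x^*|=\diag(s^*)x^*$, and put $\hat{A} \coloneqq \Mid{A}+(\Rad{A}-\umace{D})\diag(s^*)$. A short entrywise verification, using only $\umace{D}\geq0$ and $\omace{D}\geq\umace{D}$, shows $\hat{A}\in\imace{A}^*$; in particular $\hat{A}_B$ is nonsingular. Rewriting \eqref{gaveThmUf} in the orthant of~$x^*$ gives $\hat{A}_Bx^*=\uvr{b}_B$, that is, $x^*$ is precisely the basic solution of the realization $(\hat{A},\uvr{b})$ associated with the basis~$B$. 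By $B$-stability this is a feasible basis, so the feasibility condition~\eqref{lpBasis1} applies and yields $\hat{A}_Nx^* = \hat{A}_N\hat{A}_B^{-1}\uvr{b}_B \leq \uvr{b}_N$. Combining this with $\hat{A}_Bx^*=\uvr{b}_B$ gives $\hat{A}x^*\leq\uvr{b}$, which is exactly $\Mid{A}x^*+(\Rad{A}-\umace{D})|x^*| \leq \uvr{b}$; thus $x^*$ is feasible for \eqref{dfUfL}. Its objective value there is $\Mid{c}^Tx^*-\Rad{c}^T|x^*|=\unum{f}^L$, the optimal value of \eqref{dfUfL} (finite by $B$-stability), so $x^*$ attains the optimum.

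I expect the last paragraph to be the main obstacle: one has to single out the correct realization $\hat{A}\in\imace{A}^*$ determined by the sign pattern of $x^*$, and then invoke $B$-stability to control the non-basic rows of \eqref{dfUfL}; once that is done, everything else follows formally from the equality-constrained characterizations \eqref{minPfPropBstabUfLEq}, \eqref{minPfPropBstabUfEq} and the singleton-feasibility of the GAVE system.
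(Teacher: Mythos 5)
Your proposal is correct, and its core step --- the feasible sets of \eqref{minPfPropBstabUfLEq} and \eqref{minPfPropBstabUfEq} collapse to the singleton $\{x^*\}$, so Theorem~\ref{thmBstabUfLEq} and Corollary~\ref{corBstabUf} immediately yield $\unum{f}=\unum{f}^L=\Mid{c}^Tx^*-\Rad{c}^T|x^*|$ --- is precisely the paper's entire (one-sentence) proof. Where you go beyond the paper is the last paragraph: the paper never verifies that $x^*$ is feasible, let alone optimal, for the original problem \eqref{dfUfL}, whose constraints range over all $m$ rows rather than only those indexed by~$B$, even though the theorem explicitly claims $x^*$ solves \eqref{dfUfL}. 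Your argument --- take $\hat{A}=\Mid{A}+(\Rad{A}-\umace{D})\diag(s^*)$, check entrywise via $0\leq\umace{D}\leq\omace{D}$ and $\Rad{A}\geq0$ that $\hat{A}\in\imace{A}^*$, observe that \eqref{gaveThmUf} says $x^*=\hat{A}_B^{-1}\uvr{b}_B$ is the basic solution of the realization $(\hat{A},\uvr{b})$ for basis~$B$, and invoke \eqref{lpBasis1} (valid since $B$-stability covers the realization $b=\uvr{b}$ of \eqref{ilp}) to get $\hat{A}_Nx^*\leq\uvr{b}_N$ --- is sound and fills in a step the paper leaves entirely implicit; the same remark applies to your quick check that $x^*$ is feasible and optimal for the inequality-constrained problems \eqref{minPfPropBstabUf} and \eqref{minPfPropBstabUfL}. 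So the two proofs agree on the essential mechanism, but yours is the more complete one: the paper buys brevity at the cost of omitting the attainment argument for \eqref{dfUfL}, which is exactly the part you correctly identify as the only non-bookkeeping step.
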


\begin{proof}
Since the equation \eqref{gaveThmUf} has the unique solution~$x^*$, we simply have from Theorem~\ref{thmBstabUfLEq} and Corollary~\ref{corBstabUf} that $\unum{f} = \unum{f}^L$, and both values are the objectives at~$x^*$.
\end{proof}

Once we have the solution $x^*$, we also have the worst case optimal value $\unum{f}$. However, how difficult is to compute~$x^*$? Solving GAVE systems is an NP-hard problem in general, but the complexity is unknown when the interval matrix $\imace{A}^*_B$ is regular. In contrast, if one of the sufficient conditions \eqref{sufCondRegBeeck} or \eqref{sufCondRegRexRohn} is satisfied, then the GAVE is solvable in polynomial time~\cite{ManMey2006,ZamHla2021a}.

As we mentioned earlier, $\unum{f}$ is achieved for $b=\uvr{b}$ and $D=\umace{D}$. However, it is hard to specify $A$ and $c$ in general. However, in the case of basis stability, we can easily deduce that $\unum{f}$ is achieved for
\begin{align*}
c\coloneqq c_{-s},\ \ 
A_B\coloneqq (A_{e,-s})_B,
\end{align*}
where $s\coloneqq\sgn(x^*)$ and $x^*$ is the solution of~\eqref{gaveThmUf}. The nonbasic submatrix $A_N$ can take any value from $\imace{A}_N$.


\begin{example}\label{exBstabUf}
Let
\begin{align*}
A=\begin{pmatrix} 1 & 1 \\ -2 & 4 \\ -6 & 2 \\ 4 & -7 \end{pmatrix},\ \ 
b=\begin{pmatrix} 12 \\ 18 \\ 36 \\ 26 \end{pmatrix},\ \ 
c=\begin{pmatrix} 1 \\ 2 \end{pmatrix},\ \ 
D=\begin{pmatrix} 0 & 0 \\ 1 & 1 \\ 1 & 1 \\ 1 & 1 \end{pmatrix}.
\end{align*}
The feasible region is depicted in Figure~\ref{figExBstabUf} in grey color and the optimal solution is the top most vertex $x^0=(3,9)^T$. The corresponding optimal value is $c^Tx^0=21$ and the optimal basis $B=(1,2)$.  
\begin{figure}[t]
\centering
\psset{unit=2.5ex,arrowscale=1.5}
\begin{pspicture}(-10,-9)(12.5,12)
\newgray{mygray}{0.9}
\pspolygon[fillstyle=solid,fillcolor=mygray,linecolor=mygray,linewidth=0pt]
(-8,-1.333)(-7.2,0)(-6.4285,3.8572)(0,6)(3,9)(10.91,1.091)(8.667,0)(0,-4.333)(-4.4,-8)(-8,-8)
\psline[linewidth=0.75pt]
(-8,-1.333)(-7.2,0)(-6.4285,3.8572)(0,6)(3,9)(10.91,1.091)(8.667,0)(0,-4.333)(-4.4,-8)

\psaxes[ticksize=2pt,labels=all,ticks=all,Dx=2,Dy=2,showorigin=false,linewidth=1pt]{->}(0,0)(-9,-8.5)(11.5,11)
\uput[-135](-0.04,-0.04){$0$}
\uput[-90](11.5,0.02){$x_1$}
\uput[-180](0,11){$x_2$}
\psline[linewidth=1.25pt]
(2.9439,9.6878)(2.3729,9.0557)(3.0445, 8.3841)(3.6756,8.956)(2.9439,9.6878)
\psline[linewidth=1.25pt]
(2.2839,4.4845)(2.2839,11.4356)(9.7894,11.4356)(9.7894,4.4845)(2.2839,4.4845)
\uput[0](9.7894,7){$\ivr{x}$}
\end{pspicture}
\caption{(Example~\ref{exBstabUf}) The grey area is the feasible set of the nominal values; the rectangle is an enclosure $\ivr{x}$ of the optimal values; and the diamond-like shape is the real optimal solution set\label{figExBstabUf}}
\end{figure}
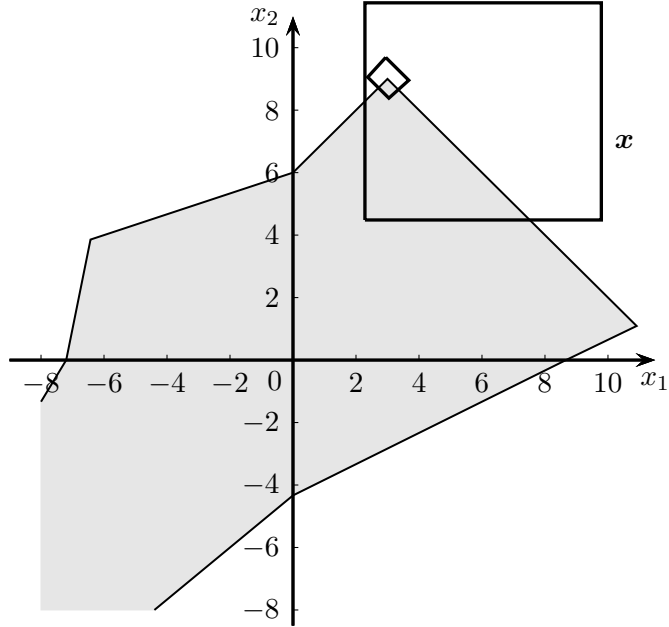

Consider now $5\%$ uncertainty in the constraint matrix~$A$. Thus, we replace the matrix by the interval matrix $\imace{A}=[A-0.05|A|,A+0.05|A|]$ and arrive at the interval AVLP problem~\eqref{intAvlp}. Let us check for basis stability with basis~$B$ by using the method from~\cite{Hla2014a}. The sufficient conditions succeed and verify basis stability. In addition, they provide an enclosure for the optimal solutions of all realizations of~\eqref{intAvlp} in the form of an interval enclosure
\begin{align*}
\ivr{x}=([ 2.2839,    9.7894],\, [ 4.4845,   11.4356] )^T.
\end{align*}
Since the intervals are nonnegative, we can also easily compute the true set of all optimal solutions, which is a convex polyhedron in this case. Concretely, it is the convex hull of the points
\begin{align*}
x^1 &= (2.9439,9.6878)^T,\ \ 
x^2 = (2.3729,9.0557)^T,\\ 
x^3 &= (3.0445, 8.3841)^T,\ \ 
x^4 = (3.6756,8.956),
\end{align*}
which has the form of a diamond-looking polygon in the figure. From the figure we see that the interval vector $\ivr{x}$ significantly overestimates the true set of optimal solutions. This is caused by two aspects: first, it is the interval arithmetic used to solve interval systems during the verification of basis stability, and, second, it is the nonconvexity of the feasible set caused by matrix $D$, which was relaxed as interval linear constraints. The good news is, however, that despite this overestimation we were able to determine the sign of the optimal solutions and consequently the whole optimal solution set.

Once we verified $B$-stability, we can compute the best and worst case optimal values $\onum{f}$ and $\unum{f}$, respectively. Solving the LP problem~\eqref{maxPropBstabOf}, we obtain $\onum{f}=22.319$. Notice that this is the objective value at point $x^1$, so this point is the optimum corresponding to the best case realization. 
Regarding the worst case optimal value, we solve the absolute value system \eqref{gaveThmUf}. It has the unique solution $x^3$, yielding the value of $\unum{f}=19.813$. 
\end{example}

\section{Conclusion}

This paper is the first contribution to the problem of absolute value liner programming with interval coefficients. We focused in particular on the problem of determining the best and worst optimal values, which is one of the basic problems in interval-valued optimization. We derived a formula for the best case, and for the worst case we proposed lower and upper bound methods. In case of basis stability, the problems become seemingly simpler, but even in this case there is some computational effort.

Since this paper initializes the research focus on interval-valued AVLP problems, several topics remain open for future investigation. For instance, the description and approximation of the set of optimal solutions. Also an adaptation of other interval-valued techniques, such as fixed ordering of intervals \cite{GhoDeb2020}, could be analysed in the context of our problem.

During the course of this research, we identified several open questions: First, no closed-form characterization of $\unum{f}$ is currently known.  Second, the computational complexity of computing $\unum{f}$ under (nondegenerate) $B$-stability is unknown, too. Third, is Theorem~\ref{thmBstabUfLEq} valid under possibly degenerate $B$-stability, or nondegeneracy is essential here?

\paragraph{Acknowledgments.} 
The author was supported by the Czech Science Foundation Grant 25-15714S.


\bibliographystyle{abbrv}
\bibliography{int_avlp}

\end{document}